\tikzset{>=stealth}
\def\@tocline#1#2#3#4#5#6#7{\relax
  \ifnum #1>\c@tocdepth % then omit
  \else
    \par \addpenalty\@secpenalty\addvspace{#2}%
    \begingroup \hyphenpenalty\@M
    \@ifempty{#4}{%
      \@tempdima\csname r@tocindent\number#1\endcsname\relax
    }{%
      \@tempdima#4\relax
    }%
    \parindent\z@ \leftskip#3\relax \advance\leftskip\@tempdima\relax
    \rightskip\@pnumwidth plus4em \parfillskip-\@pnumwidth
    #5\leavevmode\hskip-\@tempdima
      \ifcase #1
       \or\or \hskip 2em \or \hskip 2em \else \hskip 3em \fi%
      #6\nobreak\relax
    \dotfill\hbox to\@pnumwidth{\@tocpagenum{#7}}\par
    \nobreak
    \endgroup
  \fi}
\newtheorem{intro-thm}{Theorem}[]
\theoremstyle{plain}
\newtheorem{thm}{Theorem}[section]
\newtheorem{theorem}[thm]{Theorem}
\newtheorem{question}[thm]{Question}
\newtheorem{lemma}[thm]{Lemma}
\newtheorem{corollary}[thm]{Corollary}
\theoremstyle{definition}
\newtheorem{remark}[thm]{Remark}
\newtheorem{point}[thm]{}
\newcommand{\ilim}{\mathop{\varprojlim}\limits} % inverse limit
\newcommand{\inj}{\hookrightarrow}
\newcommand{\Union}{\bigcup}
\newcommand{\intersection}{\cap}
\newcommand{\Ker}{{\rm Ker  }}
\renewcommand{\tilde}{\widetilde}
\newcommand{\sR}{{\mathcal R}}
\newcommand{\A}{{\mathbb A}}
\newcommand{\N}{{\mathbb N}}
\newcommand{\Z}{{\mathbb Z}}
\let\syn\mathsf
\newcommand{\scr}{\scriptscriptstyle}
\newcommand{\ca}{\frac{A}{[A,A]}}
\begin{document}

\title[Witt vectors of non-commutative rings]
{On the comparison of two constructions of Witt vectors of non-commutative rings} 

\author[A. Hogadi]{Amit Hogadi} \address{Indian Institute of
Science, Education and Research (IISER),  Homi Bhabha Road, Pashan,
Pune - 411008, India} \email{amit@iiserpune.ac.in} 

\author[S. Pisolkar]{Supriya Pisolkar} \address{ Indian Institute of
Science, Education and Research (IISER),  Homi Bhabha Road, Pashan,
Pune - 411008, India} \email{supriya@iiserpune.ac.in} 

\date{}
\begin{abstract} Let $A$ be any associative ring, possibly non-commutative and let $p$ be a prime number. Let $E(A)$ be the ring of $p$-typical Witt vectors as constructed by Cuntz and Deninger in \cite{dc2} and $W(A)$ be that constructed by Hesselholt in \cite{h2}. The goal of this paper is to answer the following question by Hesselholt : Is $HH_0(E(A)) \cong W(A)?$ We show that in the case $p=2$, there is no such isomorphism possible if one insists that it be compatible with the Verscheibung operator and the Teichm\"uller map.
\end{abstract}

\maketitle

\section{Introduction}

Let $A$ be a associative unital ring and $p$ be a prime number. When $A$ is commutative, the classical construction of $p$-typical Witt vectors gives us a topological ring $W(A)$, equipped with a Verscheibung operator 
$$ W(A) \xrightarrow{V} W(A)$$ 
and a Teichm\"uller map, which we denote by 
$$ A \xrightarrow { \langle \ \rangle } W(A).$$

In this paper we consider two generalizations of this construction to the non-commutative case. 
One of them is a construction of an abelian group $W(A)$ given by Hesselholt in \cite{h1} (see also \cite{h2}). The other is a construction of a ring $E(A)$ by Cuntz and Deninger, given in \cite{dc2}.  Just as in the commutative case, $E(A)$ and $W(A)$ are topological rings and 
are equipped with the Verscheibung operator and the Teichm\"uller map. Moreover, both $W(A)$ and $E(A)$ are isomorphic to the classical construction of Witt vectors when $A$ is commutative.  Let $HH_0(E(A)):= E(A)/\overline{[E(A),E(A)]}$. The goal of this paper is to answer the following question of Hesselholt. 
\begin{question}\label{hq}
Is  $W(A)$ isomorphic to  $HH_0(E(A))$? 
\end{question}

We note that $HH_0(E(A))$ inherits the Verschiebung operator $V$ and the Teichm\"uller map $\langle \ \rangle $, from $E(A)$.  In this paper we only consider maps from $W(A)$ to $HH_0(E(A))$ which are compatible with $V$ and $\langle \ \rangle $. \\

\noindent The following is one of the main results of this paper. 
\begin{theorem}\label{thm1}
Let $A={\Z}\{X,Y\}$ and $p=2$.  Then
\begin{enumerate}
\item[(i)] $W(A)$ is topologically generated by $\{ V^n(\langle a \rangle) \ | \ n\in \N_0,  a\in A \}$.
\item[(ii)] $HH_0(E(A))$ is not topologically generated by $\{ V^n(\langle a \rangle) \ | \ n\in \N_0,  a\in A \}$.
\item[(iii)] there is no continuous surjective map from $W(A)\to HH_0(E(A))$ which commutes with $V$ and is compatible with $\langle \  \rangle$. 
\end{enumerate}
\end{theorem}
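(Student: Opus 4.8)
The plan is to derive (iii) as a formal consequence of parts (i) and (ii), which I treat as already proved. Suppose, toward a contradiction, that $\phi\colon W(A)\to HH_0(E(A))$ is a continuous additive surjection that commutes with $V$ and is compatible with $\langle\ \rangle$ (these are the only maps under consideration, so in particular they are homomorphisms of the underlying abelian groups). Compatibility with the Teichm\"uller map means precisely $\phi(\langle a\rangle)=\langle a\rangle$ for every $a\in A$, and commutation with $V$ means $\phi\circ V=V\circ\phi$. Together these determine $\phi$ on the distinguished elements: for all $n\in\N_0$ and $a\in A$,
$$\phi\bigl(V^n(\langle a\rangle)\bigr)=V^n\bigl(\phi(\langle a\rangle)\bigr)=V^n(\langle a\rangle).$$

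Next I would invoke part (i). Let $G\subseteq W(A)$ be the (algebraic) subgroup generated by $\{V^n(\langle a\rangle)\mid n\in\N_0,\ a\in A\}$; part (i) says its closure is $\overline{G}=W(A)$. Because $\phi$ is a continuous homomorphism, $\phi(\overline{G})\subseteq\overline{\phi(G)}$, and by the computation above $\phi(G)$ is exactly the subgroup $H$ of $HH_0(E(A))$ generated by the same family $\{V^n(\langle a\rangle)\}$. Hence
$$\phi(W(A))=\phi(\overline{G})\subseteq\overline{\phi(G)}=\overline{H}.$$

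Finally I would apply part (ii), which asserts that $HH_0(E(A))$ is \emph{not} topologically generated by $\{V^n(\langle a\rangle)\}$; that is, $\overline{H}$ is a proper subgroup. Combined with the previous inclusion this gives $\phi(W(A))\subseteq\overline{H}\subsetneq HH_0(E(A))$, contradicting the assumed surjectivity of $\phi$. Therefore no such $\phi$ exists.

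The deduction is soft once (i) and (ii) are available: the only things to verify are that the two compatibility hypotheses force $\phi(V^n\langle a\rangle)=V^n\langle a\rangle$, and that continuity carries topological generation forward through $\phi$. I therefore expect no real obstacle in (iii) itself; the substance of the theorem lies entirely in (ii), where one must exhibit a genuine element of $HH_0(E(A))$ lying outside the closed subgroup $\overline{H}$. That — for $A=\Z\{X,Y\}$ and $p=2$ — is where an explicit study of commutators in $E(A)$ and of their classes modulo $\overline{[E(A),E(A)]}$ is needed, and is the step I would expect to demand the real work.
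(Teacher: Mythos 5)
Your deduction of (iii) from (i) and (ii) is correct and coincides with the paper's route: the paper disposes of (iii) in one sentence (``(iii) is a direct consequence of (i) and (ii)''), and the details you supply --- that the two compatibilities force $\phi(V^n\langle a\rangle)=V^n\langle a\rangle$, that continuity and additivity give $\phi(W(A))=\phi(\overline{G})\subseteq\overline{\phi(G)}=\overline{H}$, and that (ii) then contradicts surjectivity --- are exactly the right ones. Your reading of ``map'' as ``group homomorphism'' also matches the paper's standing convention.

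The genuine gap is that the statement to be proved consists of all three parts, and your proposal proves neither (i) nor (ii); treating them ``as already proved'' leaves essentially the whole theorem unproved, as you yourself anticipate. Concretely, what is missing is the following. For (i) (Lemma \ref{wagen} in the paper): since $\ca$ is $p$-torsion free for $A=\Z\{X,Y\}$ (it is free abelian on circular words), the ghost map $\overline{\omega}\colon W(A)\to\big(\ca\big)^{\N_0}$ is an injective continuous homomorphism, and a computation of ghost coordinates shows $(a_0,a_1,\dots)=\sum_{i\geq 0}V^i\langle a_i\rangle$ in $W(A)$, which is precisely topological generation. For (ii): the paper first pushes the problem forward along the surjection $HH_0(E(A))\surj HH_0(X(A))$, which commutes with $V$ and $\langle\ \rangle$, so it suffices to exhibit an element of $HH_0(X(A))$ outside the closed subgroup generated by the $V^n\langle a\rangle$; the witness is the product $\langle X\rangle\langle Y\rangle$ (so the obstruction is detected already in the simpler ring $X(A)\subset A^{\N_0}$, not by working with commutators in $E(A)$ itself, as you had guessed). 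Its verification is the real content: Lemma \ref{commutator-generators} gives explicit generators $p^mV^n\big([\langle a_1\rangle\cdots\langle a_s\rangle,\langle b_1^{p^{n-m}}\rangle\cdots\langle b_t^{p^{n-m}}\rangle]\big)$, $m\leq n$, of $\overline{[X(A),X(A)]}$; reducing a hypothetical relation $\langle X\rangle\langle Y\rangle\equiv\sum_i V^{\ell_i}\langle c_i\rangle$ modulo $2$ and extracting the coordinate indexed by $1$ yields a congruence $X^2Y^2\equiv c^2 \pmod{2A+[A,A]}$ with $c=\sum_i c_i$; and Lemma \ref{xyc} rules this out by a degree-by-degree analysis in $\Z/2\{X,Y\}$ (passing to the commutative polynomial ring to pin down $c\equiv XY$, then checking explicitly that $X^2Y^2-XYXY$ is not a sum of degree-$4$ commutators). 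None of this is soft; your proposal stops exactly where the proof has to begin.
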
 
  
One can thus slightly modify Question \ref{hq} and ask for existence of a map (not necessarily surjective) from $W(A) \to HH_0(E(A))$ compatible with the Verscheibung operator and the Teichm\"uller map. The following theorem, shows that even this is not possible, at least in the case $p=2$. 

\begin{theorem}\label{thm2} Let $A:=\Z\{X,Y\}$ and $p=2$. Then there is no continuous group homomorphism from $W(A) \to HH_0(E(A))$ which is compatible with $V$ and $\langle \ \rangle$. 
\end{theorem}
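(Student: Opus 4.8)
The plan is to argue by contradiction, using Theorem \ref{thm1} both to rigidify any candidate map and to supply the obstruction. Suppose $\phi\colon W(A)\to HH_0(E(A))$ is a continuous group homomorphism compatible with $V$ and $\langle\ \rangle$. Then for all $n\in\N_0$ and $a\in A$,
$$\phi\bigl(V^n(\langle a\rangle)\bigr)=V^n\bigl(\phi(\langle a\rangle)\bigr)=V^n(\langle a\rangle),$$
so $\phi$ fixes every topological generator produced by Theorem \ref{thm1}(i), and by continuity $\phi$ is uniquely determined. To reach a contradiction it suffices to exhibit one $\Z$-linear relation among the elements $V^n(\langle a\rangle)$ that holds in $W(A)$ but fails in $HH_0(E(A))$: applying the forced $\phi$ to such a relation would force an identity in $HH_0(E(A))$ that is false.

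I would produce the relation from the Witt addition of two Teichm\"uller representatives. By the analysis underlying Theorem \ref{thm1}(i), in $W(A)$ one has an expansion
$$\langle a\rangle+\langle b\rangle=\langle a+b\rangle+\sum_{n\ge 1}V^n\bigl(\langle c_n(a,b)\rangle\bigr)$$
with explicit elements $c_n(a,b)$, the point being that in Hesselholt's construction the higher Witt components take values in $HH_0(A)=A/[A,A]$, where the ghost equation $2c_1=a^2+b^2-(a+b)^2=-(ab+ba)$ is solved by the honest Teichm\"uller element $c_1=-ab=-ba$. I would work at the length-two truncation, where for $p=2$ the sum is finite and the relation reads $\langle a\rangle+\langle b\rangle-\langle a+b\rangle=V(\langle -ab\rangle)$ in $W_2(A)$; this is a relation among generators, and truncation commutes with $V$ and $\langle\ \rangle$.

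Applying $\phi$ forces the same identity in $HH_0(E_2(A))$. Against it I would place a direct computation of $\langle a\rangle+\langle b\rangle-\langle a+b\rangle$ inside the Cuntz--Deninger ring $E(A)$ with $a=X$, $b=Y$: there the second component absorbs $-(XY+YX)$ in a component group in which $2$ is not invertible, so it cannot be rewritten as $V$ of a Teichm\"uller element, and its image in $HH_0(E_2(A))$ is $V(z)$ with $z-\langle -XY\rangle\neq 0$. This residual class is exactly the element that obstructs topological generation in Theorem \ref{thm1}(ii); its non-vanishing contradicts the forced identity $\langle X\rangle+\langle Y\rangle-\langle X+Y\rangle=V(\langle -XY\rangle)$, and no such $\phi$ can exist.

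The main obstacle is the last computation: unwinding the definition of $E(A)$ far enough to evaluate $\langle X\rangle+\langle Y\rangle-\langle X+Y\rangle$ in $HH_0(E_2(A))$ and certifying that the discrepancy $z-\langle -XY\rangle$ genuinely survives --- that it is neither killed upon passing to the commutator quotient $E(A)/\overline{[E(A),E(A)]}$ nor absorbed into $V$ of a Teichm\"uller element. Continuity is needed only to reduce to this finite truncation; the contradiction itself already lives in $HH_0(E_2(A))$.
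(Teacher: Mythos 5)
Your overall strategy is sound and runs parallel to the paper's: rigidify any candidate $\phi$ on the topological generators $V^n(\langle a\rangle)$ (the paper does this in Lemma \ref{pin}), then exhibit a relation that holds in $W(A)$ but fails in $HH_0(E(A))$. Your choice of witness --- the Witt-addition relation $\langle X\rangle+\langle Y\rangle-\langle X+Y\rangle=V(\langle -XY\rangle)$, which does hold in $W_2(A)$ by injectivity of the ghost map (here $A/[A,A]$ is $2$-torsion free) --- differs from the paper's, which instead uses Hesselholt's map $\sR$ to manufacture a tuple $r$ with $\omega(r)=0$ but $\Omega(r)\notin\overline{[X(A),X(A)]}$ (Theorem \ref{notinjective}). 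Your relation could genuinely serve as a witness: after composing with the surjection $HH_0(E(A))\to HH_0(X(A))$, the discrepancy $d=\langle X\rangle+\langle Y\rangle-\langle X+Y\rangle-V(\langle -XY\rangle)$, computed componentwise in $X(A)$, has index-$1$ component $X^2+Y^2-(X+Y)^2+2XY=XY-YX$, which is not congruent to $0$ modulo $H=A_4^0+F^5A+2A$; Lemma \ref{thelemma} then shows $d\notin\overline{[X(A),X(A)]}$.

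But that last step is exactly what you do not prove, and it is the entire technical content of the theorem. You yourself flag it as ``the main obstacle'': certifying that the discrepancy is not killed in the commutator quotient is not a routine unwinding of definitions --- it requires structural control over $\overline{[X(A),X(A)]}$, which the paper obtains from Lemma \ref{commutator-generators} together with the degree-by-degree analysis of Lemma \ref{thelemma} (every element of $\overline{[X(A),X(A)]}$ has index-$1$ component lying in $A_4^0+F^5A+2A$). Without such a lemma your argument asserts, rather than proves, the contradiction. Two further points need repair. First, the reduction to truncations is unjustified: a homomorphism $W(A)\to HH_0(E(A))$ compatible with $V$ does not obviously descend to a map $W_2(A)\to HH_0(E_2(A))$. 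The correct route is to note that $d$ lies in $\ker(W(A)\to W_2(A))$, identify that kernel with $V^2W(A)$ (again using torsion-freeness and ghost/Dwork-type congruences), apply $\phi$, and observe that elements of $V^2X(A)$ have index-$1$ component zero, so Lemma \ref{thelemma} still yields the contradiction; none of this bookkeeping appears in your sketch. Second, your final formulation --- the image is ``$V(z)$ with $z-\langle -XY\rangle\neq 0$'' --- would not give a contradiction even if proven: $V$ need not be injective on $HH_0(E_2(A))$, so $z\neq\langle -XY\rangle$ does not imply $V(z)\neq V(\langle -XY\rangle)$. What must be shown is that the difference of the two sides itself is nonzero in $HH_0$, and that again comes back to the missing commutator-control lemma.
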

 
We believe that Theorems \ref{thm1} and \ref{thm2} should hold for all primes $p$. \\

\noindent {\bf Acknowledgement}: We are highly indebted to Lars Hesselholt for drawing our attention to this question and for his encouraging comments on the first draft of this paper. We also thank J. Cuntz and C. Deninger for their comments on the first draft 
of this paper.
 
\section{Preliminaries}

Let $A$ be a unital associative ring, not necessarily commutative. Fix a prime number $p$. Let 
$\N_0:= \N\Union \{0\}$. We briefly recall a few facts about the constructions of Witt ring by Hesselholt and by Cuntz and Deninger.

\begin{point} {\bf Hesselholt's construction of $W(A)$}: \label{construction-H}
Let $ A$ be a unital associative ring. Consider the map (called as ghost map) 
$$ \omega: A^{\N_0} \to \Big( \ca \Big)^{\N_0} $$  
$$ \omega(a_0,a_1,a_2,...) := \big(\omega_0(a_0), \omega_1(a_0,a_1), \omega_2(a_0,a_1,a_2),...\big)$$
where $\omega_i$'s are ghost polynomials defined by 
$$\omega_i(a_0,...,a_i) := a_0^{p^i}+pa_1^{p^{i-1}}+p^2a_2^{p^{i-2}}+\cdots+p^i a_i.$$
$\omega$ is merely a map of sets and not a homomorphism of groups. 
For every integer $n\in \N_0$, we also have truncated versions of the above map (denoted again by $\omega$)
$$ \omega: A^n \to \Big( \ca \Big)^{n}$$
$$ \omega(a_0,...,a_{n-1}) := \big(\omega_0(a_0), \omega_1(a_0,a_1),..., \omega_{n-1}(a_0,a_1,a_2,...a_{n-1})\big)$$
\noindent Hesselholt then inductively defines groups $W_n(A)$ (see \cite{h2}) such that 
the map $\omega$ factor through 
$$A^n \xrightarrow{q_n} W_n(A) \xrightarrow{\overline{\omega}} \Big(\ca\Big)^n$$ 
and the following are satisfied 
\begin{enumerate} 
\item $W_1(A) = \ca$. 
\item $q_n$ is surjective map of sets. 
\item $\overline{\omega}$ is an additive homomorphism and is injective if $\ca$ is $p$-torsion free.
\end{enumerate}
\vspace{2mm}

\noindent Define $W(A) := \ilim_n W_n(A)$. Clearly one also has a factorization of $A^{\N_0}\xrightarrow{\omega} \big(\ca\big)^{\N_0}$ as 
$$ A^{\N_0}\xrightarrow{q} W(A) \xrightarrow{\overline{\omega}} \Big( \ca \Big)^{\N_0}$$
where $q$ is always surjective and where $\overline{\omega}$ is injective if $\ca$ has no $p$-torsion. Thus every element of $W(A)$ is of the form $q(a_0,a_1,...)$ for some (not necessarily unique) $(a_0,a_1,...)\in \A^{\N_0}$. We will often abuse notation to denote an element of $W(A)$ simply by a tuple $(a_0,a_1,....)$  instead of the cumbersome $q(a_0,a_1,...)$ while taking care to handle well definedness issues should they arise. \\

\noindent We  have the Verschiebung operator  
$$V: W(A) \to W(A) $$ 
and the Teichm\"uller map 
$$ \langle \ \rangle : A \longrightarrow W(A) $$
which satisfy

$$V (a_0,a_1,\cdots) = (0,a_0,a_1,\cdots)$$
 and
$$\langle a\rangle= ( a,0,0,\ldots)$$

\noindent One can show that $V$ and $\langle \ \rangle$ are well defined and that $V$ is a additive homomorphism. Similarly for $n\in \N_0$, we have truncated versions (denoted by the same notation) $W_n(A) \xrightarrow{V} W_n(A) $ and $A \xrightarrow{\langle \ \rangle} W_n(A)$, satisfying

$$ V(a_0,...,a_{n-1}) = (0,a_0,a_1,...,a_{n-2}).$$
and 
$$ \langle a \rangle = (a,0,0,...,0).$$ 

\end{point}

\begin{point} {\bf Cuntz and Deninger's construction of $E(A)$}: \label{construction-CD}
For any associative ring $R$ 
\begin{enumerate}
\item[(i)]  Let $V: R^{\N_0} \to R^{\N_0}$ be the map defined by  
$V(a_0,a_1,... ) := p(0,a_0,a_1,....)$.
\item[(ii)] For an element $a\in R$, define $ \langle a \rangle \in R^{\N_0}$ by 
$ \langle a \rangle := (a,a^p,a^{p^2},...)$.
\item[(iii)] Let $X(R)\subset R^{\N_0} $ be the closed subgroup generated by 
$$ \Big\{ V^m(\langle a_1 \rangle \cdots  \langle a_r \rangle ) \ | \ m \in \N_0, r\in \N, \ a_i \in R  \ \forall \ i\Big\}.$$
\noindent Similarly, if $I\subset R$ is an ideal, we let $X(I)$ denote the closed subgroup generated by 
$$ \Big\{ V^m(\langle a_1 \rangle \cdots  \langle a_r \rangle ) \ | \ m  \in \N_0, r\in \N, \ a_i \in I \ \forall \ i \Big\}.$$
\end{enumerate}
For $n\in \N_0$, we also have the following truncated versions of the definitions. 
\begin{enumerate}
\item[(i)]  Let $V: \prod_{i=0}^n R \to \prod_{i=0}^n R$ be the map defined by  
$V(a_0,a_1,...,a_n ) := p(0,a_0,a_1,...,a_{n-1})$.
\item[(ii)] For an element $a\in R$, define $ \langle a \rangle \in \prod_{i=0}^n R$ by 
$ \langle a \rangle := (a,a^p,a^{p^2},...,a^{p^n})$.
\item[(iii)] Let $X_n(R)\subset \prod_{i=0}^n R $ be the subgroup generated by 
$$ \Big\{ V^m(\langle a_1 \rangle \cdots  \langle a_r \rangle ) \ | \ m \in \N_0, r\in \N,  \ a_i \in R \ \forall \ i\Big\}.$$
\end{enumerate}

Note that the above definitions depend on the chosen prime number $p$. Now let $A$ be any associative ring. Let $\Z A$ be the monoid algebra of the multiplicative monoid underlying $A$. Thus the elements of $\Z A$ are formal sums of the form $ \sum_{r\in \Z A} n_r [r] \ \ \ \text{with almost all }n_r=0$. We have a natural epimorphism of rings from $\Z A \to A$ and we let $I$ denote its kernel. One now defines 
$$ E(A) := \frac{X(\Z A)}{X(I)}  \ \ \ \text{and} \ \ \ E_n(A):= \frac{X_n(\Z A)}{X_n(I)} $$ 
We have a set map, called the Teichm\"uller map
$$ \langle \ \rangle : A \to E(A) $$
$$ \langle a \rangle := ([a],[a]^p,[a]^{p^2},...) \ \ \syn{mod} \ X(I).$$
It is elementary to check that 
\begin{enumerate}
\item[(i)] $X(\Z A)$ is a subring of $(\Z A)^{\N_0}$ and $X(I) \subset X(\Z A)$ is a two sided ideal. Thus $E(A)$ has the structure of an associtive ring. Similarly $E_n(A)$ has a ring structure $ \forall \ n\in \N_0$. 
\item[(ii)] $V:({ \Z A})^{\N_0} \to (\Z A)^{\N_0}$ induces maps (denoted by the same notation for simplicity) called Verscheibung operators 
$$  E(A) \xrightarrow{V} E(A) \ \ , \ \  E_n(A) \xrightarrow{V} E_n(A). $$
\item[(iii)] The morphism $E(A)\to E_n(A)$ is a continuous epimorphism of rings, where target has discrete topology and $E(A)$ has topology inherited from the product topology on $(\Z A)^{\N_0}$. Moreover this map commutes with $V$.  
\end{enumerate}
Recall that for any associative ring $R$, $HH_0(R):= R/[R,R]$. However, as $E(A)$ has a topology, we use the notation $HH_0(E(A))$ to denote $E(A)/\overline{[E(A),E(A)]}$ where $\overline{[E(A),E(A)]}$ is the closure of $[E(A),E(A)]$. The Verscheibung operator and the Teichm\"uller map induce (see \eqref{vinduces})
$$ V: HH_0(E(A)) \to HH_0(E(A))$$ 
and $$ \langle \ \rangle : A \to HH_0(E(A)).$$
%%%%%%%%%%%%%%%%%%%%%%%%%%%%%%%%%%%%%%%%%%%%%%%%%%%%%%%
\end{point}

%%%%%%%%%%%%%%%%%%%%%%%%%%%%%%%%%%%%%%%%%%%%%%%%%%%%%%%%%
\begin{point}{\bf Comparison of $E(A)$ and  $X(A)$}: \label{eaxa} In the case when $A$ is commutative and has no $p$-torsion, $E(A)$ is isomorphic to $X(A)$. In the non-commutative case we do not know a sufficient condition for  $E(A)$ to be isomorphic to $X(A)$. Nevertheless, in this paper we will often find it easier to work in the ring $X(A)$ instead of $E(A)$. We note that  exact sequence 
$$0 \to I \to \Z A \to A\to 0$$
induces a map 
$ X(\Z A) \to X(A)$, with $X(I)$ in its kernel. Thus we have a continuous ring homomorphism from  
$$ E(A):= X(\Z A)/X(I) \stackrel{\pi}\longrightarrow X(A) .$$
Like $E(A)$, $X(A)$ also has a Verscheibung operator $V: X(A) \to X(A)$ given by 
$$ V(a_0,a_1,...) = p(0,a_0,a_1,...)$$
and the Teichm\"uller map $\langle \ \rangle: A \to X(A)$ given by 
$$ \langle a\rangle := (a,a^p,a^{p^2},....).$$
Clearly, the map $E(A) \xrightarrow{\pi} X(A)$ is compatible with the Verscheibung operator and Teichm\"uller map on both sides.
\end{point}

\section{Proof of Theorem \ref{thm1}}
\begin{lemma}\label{commutator-generators}Let $A$ be any associative ring. Then $\overline{[X(A),X(A)]}$ is the closed subgroup of $X(A)$ generated by 
$$ \Big\{  p^mV^n\big( [ \langle a_1 \rangle \cdots  \langle a_s \rangle, \langle b_1^{p^{n-m}} \rangle \cdots \langle b_t^{p^{n-m}}\rangle ]   \big)\ | \ m,n\in \N_0, m\leq n,  s,t\in \N, a_i,b_j \in A    \Big\} .$$
\end{lemma}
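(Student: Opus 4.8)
The plan is to reduce the computation of $\overline{[X(A),X(A)]}$ to the topological generators $V^m(\langle a_1\rangle\cdots\langle a_r\rangle)$ of $X(A)$, and to exploit the fact that the multiplication on $A^{\N_0}$ is coordinatewise, which turns the interaction between $V$ and $\langle\ \rangle$ into clean algebraic identities. First I would record the basic relations. Since the product is coordinatewise, a direct check on coordinates gives, for all $x,y\in A^{\N_0}$, $a\in A$ and $m,k\in\N_0$,
\[
V^m(x)\,V^m(y)=p^m\,V^m(xy),\qquad \langle a\rangle\,V^k(x)=V^k(\langle a^{p^k}\rangle\,x),\qquad V^k(x)\,\langle a\rangle=V^k(x\,\langle a^{p^k}\rangle).
\]
Iterating the last two identities along a Teichm\"uller product $u=\langle a_1\rangle\cdots\langle a_s\rangle$ yields $u\,V^k(w)=V^k\big(\langle a_1^{p^k}\rangle\cdots\langle a_s^{p^k}\rangle\,w\big)$ and $V^k(w)\,u=V^k\big(w\,\langle a_1^{p^k}\rangle\cdots\langle a_s^{p^k}\rangle\big)$, where the order of factors must be tracked since $A$ is noncommutative.

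Next I would compute the commutator of two generators. Put $u=\langle a_1\rangle\cdots\langle a_s\rangle$, $w=\langle b_1\rangle\cdots\langle b_t\rangle$, and consider $V^m(u)$ and $V^n(w)$ with $m\le n$ (the case $m>n$ follows by antisymmetry of the bracket). Writing $V^n(w)=V^m(V^{\,n-m}(w))$ and applying $V^m(x)V^m(y)=p^mV^m(xy)$ followed by the Frobenius--Verschiebung identities gives
\[
V^m(u)\,V^n(w)=p^m\,V^n\big(\langle a_1^{p^{n-m}}\rangle\cdots\langle a_s^{p^{n-m}}\rangle\,w\big),\qquad
V^n(w)\,V^m(u)=p^m\,V^n\big(w\,\langle a_1^{p^{n-m}}\rangle\cdots\langle a_s^{p^{n-m}}\rangle\big),
\]
so that
\[
[V^m(u),V^n(w)]=p^m\,V^n\Big(\big[\,\langle a_1^{p^{n-m}}\rangle\cdots\langle a_s^{p^{n-m}}\rangle,\ \langle b_1\rangle\cdots\langle b_t\rangle\,\big]\Big).
\]
Using antisymmetry of the bracket and relabelling the two families $(a_i)$ and $(b_j)$ (and $s\leftrightarrow t$), this element is, up to sign, precisely a generator of the set displayed in the lemma, and conversely every displayed generator arises this way. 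Hence the set of commutators of pairs of topological generators of $X(A)$ agrees, up to sign, with the set in the statement, and therefore generates the same subgroup.

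Finally I would pass from the algebraic commutator subgroup to its closure. Let $S$ be the topological generating set $\{V^m(\langle a_1\rangle\cdots\langle a_r\rangle)\}$ of $X(A)$, so every element of $X(A)$ is a limit of finite $\Z$-linear combinations of elements of $S$. Because the product on $A^{\N_0}$ is coordinatewise and $A$ is discrete, multiplication, and hence the bracket $[\,\cdot,\cdot\,]$, is continuous; and the bracket is biadditive. For $x,y\in X(A)$ written as limits of finite combinations $x_\alpha,y_\alpha\in\langle S\rangle$, biadditivity expands each $[x_\alpha,y_\alpha]$ as an integer combination of brackets $[s,s']$ with $s,s'\in S$, and passing to the limit gives $[x,y]\in\overline{\langle\,[S,S]\,\rangle}$. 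Since $[X(A),X(A)]$ is generated by such brackets, this shows $[X(A),X(A)]\subseteq\overline{\langle\,[S,S]\,\rangle}$; the reverse inclusion $\langle\,[S,S]\,\rangle\subseteq[X(A),X(A)]$ is immediate. Taking closures yields $\overline{[X(A),X(A)]}=\overline{\langle\,[S,S]\,\rangle}$, which by the previous step is the closed subgroup generated by the displayed set.

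I expect the main obstacle to be exactly this last reduction rather than the coordinatewise identities, which are routine: the algebraic commutator subgroup is generated by brackets of \emph{all} elements of $X(A)$, not merely of the topological generators, so one genuinely needs the continuity of multiplication together with biadditivity to push the computation on generators out to the closure. A secondary point requiring care is the ordering of the noncommutative Teichm\"uller products when iterating the Frobenius--Verschiebung identities, which is what makes the two entries of the bracket asymmetric (one family twisted by $p^{n-m}$, the other untwisted) and produces the precise shape displayed in the statement.
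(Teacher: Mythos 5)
Your proof is correct and follows essentially the same route as the paper's: reduce $\overline{[X(A),X(A)]}$ to the closed subgroup generated by commutators of the topological generators $V^n(\langle a_1\rangle\cdots\langle a_s\rangle)$, use antisymmetry of the bracket to assume $m\le n$, and then identify $[V^m(u),V^n(w)]$ with $p^mV^n([\,\cdot\,,\,\cdot\,])$ via the Frobenius--Verschiebung identities. The only difference is that you supply the details the paper compresses (the continuity/biadditivity argument behind the reduction to generators, and the coordinatewise verification the paper calls ``a straightforward calculation''), and your generator appears with the twist on the other family, which as you note agrees with the paper's displayed set up to sign and relabelling.
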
 
\begin{proof}  Since $X(A)$ is the closed subgroup of $A^{\N_0}$ generated by 
$$ \big\{V^n\big( \langle a_1 \rangle \cdots \langle a_s \rangle\big) \ | \  s\in \N,\  n\in \N_0, \ a_i \in A \big\},$$
$\overline{[X(A),X(A)]}$ is the closed subgroup generated by 
$$ \big\{ \big[V^n\big( \langle a_1 \rangle \cdots \langle a_s \rangle\big), V^m\big( \langle b_1 \rangle \cdots  \langle b_t \rangle\big) \big] \ | \ s,t\in \N, \ m,n\in \N_0 , \ a_i,b_j \in A \big\}.$$
Since $$ \big[V^n\big( \langle a_1 \rangle \cdots \langle a_s \rangle\big), V^m\big( \langle b_1 \rangle \cdots  \langle b_t \rangle\big) \big] = -\big[ V^m\big( \langle b_1 \rangle \cdots  \langle b_t \rangle\big), V^n\big( \langle a_1 \rangle \cdots \langle a_s \rangle\big) \big]$$ 
$\overline{[X(A),X(A)]}$ is also the closed subgroup generated by 
$$ \big\{ \big[V^n\big( \langle a_1 \rangle \cdots \langle a_s \rangle\big), V^m\big( \langle b_1 \rangle \cdots  \langle b_t \rangle\big) \big] \ | \ s,t\in \N, \ a_i,b_j \in A, \ m,n\in \N_0 \ \text{with} \ m\leq n \big\}.$$
When $m\leq n$, a straightforward calculation shows that 
$$\big[ V^n\big( \langle a_1 \rangle \cdots \langle a_s \rangle\big), V^m\big( \langle b_1 \rangle \cdots  \langle b_t \rangle\big) \big] = p^{m} V^n\big( [\langle a_1\rangle \cdots \langle a_s \rangle, \langle b_1^{p^{n-m}} \rangle \cdots \langle b_t^{p^{n-m}}\rangle ]  \big).$$
This proves the lemma. 
\end{proof}

\begin{corollary} \label{vinduces}
$V\big( \overline{[X(A),X(A)]} \big) \subset \overline{[X(A),X(A)]}$. Thus $V$ induces a map  (to be also denoted by $V$)
$$ HH_0(X(A) ) \xrightarrow{V} HH_0(X(A)).$$
\end{corollary}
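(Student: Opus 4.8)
The plan is to prove Corollary \ref{vinduces} directly from the explicit description of $\overline{[X(A),X(A)]}$ furnished by Lemma \ref{commutator-generators}. Since $V$ is a continuous additive homomorphism of $X(A)$, to check that it carries the closed subgroup $\overline{[X(A),X(A)]}$ into itself it suffices to check that it carries each of the given topological generators back into $\overline{[X(A),X(A)]}$; continuity and additivity then propagate this to the whole closed subgroup. So I would reduce immediately to a single generator and compute $V$ of it.

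Concretely, a generator has the form $g = p^m V^n\big([\langle a_1\rangle\cdots\langle a_s\rangle,\ \langle b_1^{p^{n-m}}\rangle\cdots\langle b_t^{p^{n-m}}\rangle]\big)$ with $m\le n$. Applying $V$ and using that $V$ commutes with multiplication by the integer $p$, I would write $V(g) = p^m V^{n+1}\big([\langle a_1\rangle\cdots\langle a_s\rangle,\ \langle b_1^{p^{n-m}}\rangle\cdots\langle b_t^{p^{n-m}}\rangle]\big)$. The key observation is that this is again exactly a generator from the list in Lemma \ref{commutator-generators}: replacing $n$ by $n+1$ and keeping $m$ fixed still satisfies $m\le n+1$, and the inner exponent $p^{n-m}$ is precisely $p^{(n+1)-m-1}$, which is not of the required form $p^{(n+1)-m}$. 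Here is the one point that needs care, so I would handle it head on.

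The honest way to see membership is to appeal to the inner computation already recorded in the proof of Lemma \ref{commutator-generators}, rather than to force the exponents to match the generator template. Indeed, that proof shows $p^{m}V^{n'}\big([\langle\underline{a}\rangle,\langle\underline{b}^{p^{n'-m}}\rangle]\big) = \big[V^{n'}(\langle\underline{a}\rangle),V^{m}(\langle\underline{b}\rangle)\big]$ for $m\le n'$, where I abbreviate $\langle\underline{a}\rangle = \langle a_1\rangle\cdots\langle a_s\rangle$ and similarly for $\underline{b}$. Thus the generator $g$ itself equals the plain commutator $\big[V^{n}(\langle\underline{a}\rangle),V^{m}(\langle\underline{b}\rangle)\big]$, and applying $V$ gives $V(g)=\big[V^{n+1}(\langle\underline{a}\rangle),V^{m+1}(\langle\underline{b}\rangle)\big]$, since $V$ distributes over the commutator bracket (being additive, and using that $V$ is multiplicative up to the factor of $p$ in a way compatible with the bracket). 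This last expression is manifestly a commutator of two elements of $X(A)$, hence lies in $[X(A),X(A)]\subset\overline{[X(A),X(A)]}$.

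The main obstacle, and the step I expect to demand the most attention, is justifying that $V$ sends a commutator to a commutator, i.e. the identity $V\big([u,v]\big)=\big[V(u),V(v)\big]$ for the relevant elements; this is the non-formal input, since $V$ is not a ring homomorphism but only an additive map satisfying $V(x)V(y)=pV(xy)$ on $A^{\N_0}$. I would verify it by direct expansion: $V([u,v])=V(uv)-V(vu)$, and comparing with $[V(u),V(v)]=V(u)V(v)-V(v)V(u)=p\,V(uv)-p\,V(vu)$, one sees the two differ by the factor $p$, so in fact the clean statement is $p\,V([u,v])=[V(u),V(v)]$. Absorbing this factor of $p$ into the leading $p^m$ is exactly what converts the $n\mapsto n+1$ shift into a legitimate generator, and keeping that bookkeeping straight is the crux; once it is settled, closure under the continuous additive map $V$ follows formally, and the induced map on $HH_0(X(A))$ is then well defined.
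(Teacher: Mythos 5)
Your opening reduction is fine: $V$ is additive and continuous, so it suffices to check that $V$ sends each generator from Lemma \ref{commutator-generators} back into $\overline{[X(A),X(A)]}$. The problem is the core of the argument, which is internally inconsistent. In your third paragraph you assert $V([u,v])=[V(u),V(v)]$; in your fourth paragraph you correctly compute $V(x)V(y)=p\,V(xy)$ and hence $[V(u),V(v)]=p\,V([u,v])$. These two statements contradict each other, and only the second is true. It points in the wrong direction for your purposes: writing a generator as $g=p^mV^n\big([\langle a_1\rangle\cdots\langle a_s\rangle,\langle b_1^{p^{n-m}}\rangle\cdots\langle b_t^{p^{n-m}}\rangle]\big)=\big[V^n(\langle a_1\rangle\cdots\langle a_s\rangle),V^m(\langle b_1\rangle\cdots\langle b_t\rangle)\big]$, what actually follows is
$$ p\,V(g)=\big[V^{n+1}(\langle a_1\rangle\cdots\langle a_s\rangle),\,V^{m+1}(\langle b_1\rangle\cdots\langle b_t\rangle)\big], $$
i.e.\ $V(g)$ is $\tfrac1p$ times the Lemma \ref{commutator-generators} generator with parameters $(m+1,n+1)$ --- exactly the Frobenius-twist mismatch you noticed, re-expressed. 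There is no ``absorbing the factor of $p$ into the leading $p^m$'': that would require dividing by $p$, not multiplying, and nothing says $\overline{[X(A),X(A)]}$ is $p$-divisible. So your argument establishes only $p\,V\big(\overline{[X(A),X(A)]}\big)\subset\overline{[X(A),X(A)]}$, which is strictly weaker than the corollary.

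Moreover this is not a bookkeeping defect that better care could fix; the $p$-divisibility obstruction is genuine. Take $A=\Z\{X,Y\}$, $p=2$ and $g=[\langle X\rangle,\langle Y\rangle]\in[X(A),X(A)]$. Then the component of $V(g)$ in index $1$ is $2[X,Y]=2XY-2YX$. On the other hand, by the first reduction in the proof of Lemma \ref{commutator-generators} together with the discreteness of each factor of $A^{\N_0}$, the index-$1$ component of any element of $\overline{[X(A),X(A)]}$ is a finite $\Z$-combination of index-$1$ components of commutators $\big[V^n(\langle a_1\rangle\cdots\langle a_s\rangle),V^m(\langle b_1\rangle\cdots\langle b_t\rangle)\big]$; these vanish when $\max(m,n)\geq 2$, and in the remaining cases are, up to sign, of the forms $[a_1^2\cdots a_s^2,\,b_1^2\cdots b_t^2]$, $2[a_1\cdots a_s,\,b_1^2\cdots b_t^2]$, $4[a_1\cdots a_s,\,b_1\cdots b_t]$. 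Since the degree-$1$ homogeneous part of a product of squares lies in $2A_1$ (the degree-$1$ part of $a^2$ is $2a_{(0)}a_{(1)}$, with $a_{(0)}$ a central integer), the degree-$2$ homogeneous part of every such combination lies in $4A_2$; but $2XY-2YX\notin 4A_2$. Hence $V(g)\notin\overline{[X(A),X(A)]}$ --- in the style of argument the paper itself uses for Lemma \ref{thelemma}, only computed modulo $4$ instead of modulo $2$. So no version of the ``check it on the generators'' strategy can close the gap: the paper states this corollary with no proof, and the implicit generator argument runs into precisely the factor-of-$p$ discrepancy that your final paragraph identifies and then waves away rather than resolves.
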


Thus like the Witt rings and like $X(A)$, $HH_0(X(A))$ also comes equipped with Verscheibung operator and a Teichm\"uller map.

\begin{lemma} \label{wagen} Let $A$ be any associative unital ring such that $\ca$ has no $p$-torsion. Then 
\begin{enumerate}
\item[(i)] for any element $(a_0,a_1,...)\in W(A)$
$$ (a_0,a_1,...) = \sum_{i=0}^\infty V^{i}\langle a_i \rangle.$$
\item[(ii)] $W(A)$ is topologically generated by the set $ \Big\{ V^n\langle a \rangle \ | \  a\in A, n\in \N_0 \Big\}$, i.e. $W(A)$ is the closure of the subgroup generated by this set. 
\end{enumerate}
 
\end{lemma}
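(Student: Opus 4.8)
The plan is to prove (i) by transporting the asserted identity into the ghost ring, where it reduces to a direct polynomial computation, and then to deduce (ii) immediately from (i) together with the surjectivity of $q$.

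For part (i) I would first exploit the hypothesis that $\ca$ has no $p$-torsion: by the properties recorded in \ref{construction-H} this makes the map $\overline{\omega}\colon W(A) \to (\ca)^{\N_0}$ injective, so it suffices to verify the claimed identity after applying $\overline{\omega}$, i.e. coordinate by coordinate in the ghost ring. The one ingredient I need is the behaviour of $V$ on ghost coordinates. A short calculation from the definition $\omega_n(b_0,\ldots,b_n) = \sum_{j=0}^n p^j b_j^{p^{n-j}}$ shows that if $x$ has ghost vector $(w_0,w_1,\ldots)$ then $Vx$ has ghost vector $(0, p w_0, p w_1, \ldots)$; this uses only that $V$ shifts coordinates by one while the weight $p^j$ reindexes as $p\cdot p^{j-1}$. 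Iterating, and starting from $\overline{\omega}(\langle a_i\rangle) = (a_i, a_i^{p}, a_i^{p^2}, \ldots)$, I obtain that the $n$-th ghost coordinate of $V^i\langle a_i\rangle$ is $0$ for $n<i$ and $p^i a_i^{p^{n-i}}$ for $n\geq i$.

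Next I would sum over $i$. The $n$-th ghost coordinate of $\sum_i V^i\langle a_i\rangle$ is then exactly $\sum_{i=0}^n p^i a_i^{p^{n-i}} = \omega_n(a_0,\ldots,a_n)$, which is precisely the $n$-th ghost coordinate of $(a_0,a_1,\ldots)$. Since $\overline{\omega}$ is injective, the identity holds in $W(A)$. Before invoking injectivity I should check that the infinite sum genuinely converges in the inverse-limit topology: truncating to $W_n(A)$ kills $V^i\langle a_i\rangle$ for every $i\geq n$ (the truncated $V$ satisfies $V^n=0$ on $W_n(A)$), so the partial sums stabilize in each $W_n(A)$ and assemble to a well-defined element of $W(A)=\ilim_n W_n(A)$. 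This is what makes the two sides legitimately comparable.

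Finally, part (ii) is a formal consequence: since $q$ is surjective every element of $W(A)$ has the form $(a_0,a_1,\ldots)$, and by (i) it is the limit of the partial sums $\sum_{i=0}^{n-1} V^i\langle a_i\rangle$, each of which lies in the subgroup generated by $\{V^n\langle a\rangle\}$; hence $W(A)$ is the closure of that subgroup. The only point where care is genuinely required is the ghost computation in the non-commutative setting, and this is precisely where I expect the main (though mild) obstacle to lie; however, because the ghost polynomials involve only powers $a_j^{p^{n-j}}$ of single elements and never products of distinct elements, reduction modulo $[A,A]$ introduces no ambiguity and the classical identities carry over verbatim.
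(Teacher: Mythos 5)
Your proposal is correct and follows essentially the same route as the paper: both reduce part (i) to a ghost-coordinate computation using the injectivity of $\overline{\omega}$ (which is where the $p$-torsion-freeness of $\ca$ enters), verify the identity $\sum_{i} p^i a_i^{p^{n-i}} = \omega_n(a_0,\ldots,a_n)$ componentwise, and deduce (ii) formally from (i). Your explicit checks of convergence of the infinite sum and of the harmlessness of working modulo $[A,A]$ are points the paper leaves implicit, but they do not constitute a different argument.
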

\begin{proof} $(ii)$ is a direct consequence of $(i)$. Thus it is enough to prove $(i)$. The ghost map $W(A) \xrightarrow{\overline{\omega}} (\ca)^{\N_0}$ is a continuous group homomorphism, where addition in $(\ca)^{\N_0}$ is componentwise. Since $\ca$ has no $p$-torsion, $\overline{\omega}$ is injective. Thus, to prove $(i)$, it is enough to show $$ \overline{\omega}(a_0,a_1,...) = \sum_{i=0}^{\infty} \overline{\omega} \big( V^i(\langle a_i \rangle) \big).$$
This is checked by explicit calculation. 
\begin{align*}
\sum_{i=0}^{\infty} \overline{\omega} \big( V^i(\langle a_i \rangle) \big) & = \sum_{i=0}^{\infty} \overline{\omega}(0,0,...,0,a_i,0,...) & \text{(where $a_i$ is in the $i$-th position)} \\
& = \sum_{i=0}^{\infty}(0,0,...,0,p^ia_i,p^i a_i^{p},p^ia_i^{p^2},\cdots) & \\
& = \sum_{i=0}^{\infty} (0,0,...,0,p^ia_i,p^i a_i^{p},p^ia_i^{p^2},\cdots) & \\
 & = (a_0,a_0^p+pa_1,a_0^{p^2}+pa_1^p+p^2a_2,....) & \\
 & = \overline{\omega}(a_0,a_1,a_2,....) & 
 \end{align*}
\end{proof}

\begin{remark}
It is possible that the above lemma holds without the assumption that $\ca$ has no $p$-torsion. 
\end{remark}

Let $A={\Z}\{X,Y\}$. Then $A=\oplus_{i\geq 0} A_i$  is naturally a graded ring where $A_i$ is the free abelian group generated by words in $X,Y$ of length $i$.  The center of $A$ is $A_0=\Z$. Consider the filtration $F^{\bullet}A$ on $A$ defined by 
$$ F^nA = \oplus_{i\geq n} A_i.$$

\begin{proof}[{\bf Proof of Theorem \ref{thm1}}] $(iii)$ is a direct consequence of $(i)$ and $(ii)$. Lemma \ref{wagen} proves $(i)$. Thus to prove the theorem, it remains to show that $HH_0(E(A))$ is not topologically generated by $ \Big\{ V^n\langle a \rangle \ | \  a\in A, n\in \N_0 \Big\}$. The surjectivity of $E(A)$ to $X(A)$ implies the surjectivity of the map $HH_0(E(A)) \to HH_0(X(A))$. As this map commutes with the $V$ and $\langle \ \rangle$, it suffices to show that $HH_0(X(A))$ is not topologically generated by  $ \Big\{ V^n\langle a \rangle \ | \  a\in A, n\in \N_0 \Big\}.$ More specifically, we will show that 
the element $\langle X \rangle \langle Y \rangle \in HH_0(X(A))$ is not in the closure of the subgroup  generated by 
$ \Big\{ V^n\langle a \rangle \ | \  a\in A, n\in \N_0 \Big\}$. Suppose it is. Then, there exists elements $c_i\in A$ and $\ell_i\in \N_0$, such that in $X(A)$ we have a congruence of the form  
$$  \langle X \rangle \langle Y \rangle = \sum_i V^{\ell_i}\langle c_i \rangle \ \syn{mod} \ \overline{[X(A),X(A)]}. $$
Clearly the integers $\ell_i$ are such that the summation above converges. In particular, only finitely many of the $\ell_i$'s can be zero.  We may rewrite the above as 
$$  \langle X \rangle \langle Y \rangle - \sum_i V^{\ell_i}\langle c_i \rangle \in \ \overline{[X(A),X(A)]}.$$
By Lemma \ref{commutator-generators}, we have an equality of the form (for suitable choices of integers and elements of $A$)
$$\langle X \rangle \langle Y \rangle - \sum_{i=0}^\infty V^{\ell_i}\langle c_i \rangle = \sum_{j=0}^\infty 2^{m_j}V^{n_j} ([\langle  a_{\scr j,1} \rangle \cdots \langle a_{\scr j,s_j}\rangle, \langle b_{\scr j,1}^{2^{\scr n_j-m_j}}\rangle \cdots \langle b_{\scr j,t_j}^{2^{\scr n_j-m_j}} \rangle ]).\ \ \ \  (\star) $$
Without loss of generality, we assume that 
$$ 0 = \ell_0 = \cdots=  \ell_r < \ell_{r+1} \leq \ell_{r+2} \leq \cdots $$
$$ 0 = n_0 = \cdots=  n_\kappa < n_{\kappa+1} \leq n_{\kappa+2} \leq \cdots $$

\noindent Modulo $2$, most of the terms in $(\star)$ become zero and we have the following congruence in $\A^{\N_0}$
$$\langle X \rangle \langle Y \rangle - \sum_{i=0}^r \langle c_i \rangle = \sum_{j=0}^\kappa ([\langle  a_{\scr j,1} \rangle \cdots \langle a_{\scr j,s_j}\rangle, \langle b_{\scr j,1}\rangle \cdots \langle b_{\scr j,t_j} \rangle ] \ \ \syn{mod} \ 2 (A^{\N_0}).$$
Looking at the second component (i.e. component indexed by $1$) of the tuples we get the following congruence in the ring $A=\Z\{X,Y\}$.
$$ X^2Y^2 - \sum_{i=0}^r c_i^2 = \sum_{j=0}^\kappa [  a_{\scr j,1}^2  \cdots  a_{\scr j,s_j}^2,  b_{\scr j,1}^2 \cdots  b_{\scr j,t_j}^2  ] \ \ \syn{mod}\  2A$$
i.e. 
$$ X^2Y^2 =  \sum_{i=0}^r c_i^2 \ \ \syn{mod} \ (2A + [A,A]).$$
Observe that for any two elements $a,b\in A$ we have 
\begin{align*}
 (a+b)^2 & = a^2+b^2 + ab + ba \\
           & = a^2+b^2 \ \ \syn{mod} \ (2A + [A,A]).
\end{align*}
Thus if we let $c:= \sum_{i=0}^r c_i$, then $$c^2\equiv \sum_{i=0}^r c_i^2 \ \syn{mod} \ (2A + [A,A]).$$ 
This implies
$$ X^2Y^2 =  c^2 \ \ \syn{mod} \ (2A + [A,A]).$$
By Lemma \ref{xyc} below, this is impossible. This proves the proposition.  

\end{proof}

\begin{lemma}\label{xyc} Let $A=\Z\{X,Y\}$. Then,  $\forall \ c\in A$
$$ X^2Y^2 \neq c^2 \ \ \syn{mod} \ (2A + [A,A]+F^5A).$$ 
\end{lemma}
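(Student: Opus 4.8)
The plan is to reduce everything to a computation in the finite-dimensional graded $\F_2$-vector space $B := A/(2A + [A,A] + F^5A)$. Since $X^2Y^2$ is homogeneous of degree $4$ and $F^5A$ kills all of degree $\geq 5$, only the degree-$4$ piece is relevant, so the claim becomes a statement about $B_4$. The first step is to recognize $B_4$ as spanned by the \emph{necklaces} (cyclic words) of length $4$ in $X,Y$ over $\F_2$: the group $[A,A]$ is graded, and its degree-$4$ part is spanned by the homogeneous commutators $uv - vu$ with words $u,v$, $|u|+|v|=4$; since $uv$ and $vu$ are cyclic rotations of one another, working modulo $[A,A]$ identifies each length-$4$ word with all its cyclic rotations.

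I would not actually need the full basis statement; a single functional suffices. Let $\phi\colon A_4 \to \F_2$ send a length-$4$ word $w$ to $1$ if $w$ lies in the cyclic class $\{XXYY,\,XYYX,\,YYXX,\,YXXY\}$ of $X^2Y^2$, and to $0$ otherwise, extended $\Z$-linearly and reduced modulo $2$; extend $\phi$ to all of $A$ by first projecting onto $A_4$. As $\phi$ takes equal values on a word and each of its cyclic rotations, one checks $\phi(uv - vu) = 0$ whenever $|u|+|v|=4$, so $\phi$ vanishes on $[A,A]$; it vanishes on $2A$ because it is $\F_2$-valued, and on $F^5A$ (and on the lower degrees) because it is supported in degree $4$. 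Hence $\phi$ descends to $B$, with $\phi(X^2Y^2) = 1$.

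Next I would reduce the square. Writing $c = \sum_{i\geq 0} c_i$ with $c_i \in A_i$, the degree-$4$ component of $c^2$ is $c_0c_4 + c_1c_3 + c_2^2 + c_3c_1 + c_4c_0$. Since $c_0 \in \Z$ is central and $c_1c_3 \equiv c_3c_1 \ \syn{mod}\ [A,A]$, the cross terms pair up to $2c_0c_4 + 2c_1c_3 \equiv 0 \ \syn{mod}\ (2A + [A,A])$; thus in $B_4$ one has $c^2 \equiv c_2^2$, and it remains only to prove $\phi(c_2^2) = 0$ for every $c_2 \in A_2$.

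The final step, which is the crux, is the computation on degree-$2$ monomials. Over $\F_2$ write $c_2 = \alpha X^2 + \beta XY + \gamma YX + \delta Y^2$. In characteristic $2$ and modulo commutators the squaring map is additive, since $(r+s)^2 = r^2 + s^2 + rs + sr \equiv r^2 + s^2 \ \syn{mod}\ (2A + [A,A])$, so it is enough to evaluate $\phi$ on the squares of the four monomials. These are $X^4$, $XYXY$, $YXYX$, $Y^4$, whose necklaces are $[X^4]$, $[XYXY]$, $[XYXY]$, $[Y^4]$ — none of which is the class of $X^2Y^2$. Hence $\phi(c_2^2)=0$, giving $\phi(c^2)=0 \neq 1 = \phi(X^2Y^2)$ in $B$, which is the required contradiction. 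The content lies entirely in this last observation, that the image of the (Frobenius-)squaring map on $A_2$ avoids the necklace of $X^2Y^2$; the rest is bookkeeping about cyclic words.
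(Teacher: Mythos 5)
Your proof is correct, and it takes a genuinely different route from the paper's. The paper first reduces modulo $2$ and maps $\overline{A}=\Z/2\{X,Y\}$ onto the commutative ring $\Z/2[X,Y]$ to pin down $c$: comparing homogeneous components forces $a_0=a_1=0$ and $a_2=XY+\epsilon(XY-YX)$, and the identity $(XY-YX)^2\equiv [X,YXY]\equiv 0 \ \syn{mod}\ [\overline{A},\overline{A}]$ then reduces everything to the single case $c=XY$; only at that point does it prove that the one element $X^2Y^2-XYXY$ lies outside $[\overline{A},\overline{A}]+F^5\overline{A}$, by producing a subspace $K\subset\overline{A}_4$ containing $[\overline{A},\overline{A}]\cap\overline{A}_4$ (verified by a table of all commutators $a_1b_3-b_3a_1$ and $a_2b_2-b_2a_2$) but not $X^2Y^2-XYXY$. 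You dispense with the abelianization and case reduction entirely: your functional $\phi$ descends to $A/(2A+[A,A]+F^5A)$ by exactly the same mechanism that makes the paper's $K$ absorb commutators (cyclic invariance, since $uv$ and $vu$ are rotations of each other), and the Frobenius additivity $(r+s)^2\equiv r^2+s^2 \ \syn{mod}\ (2A+[A,A])$ lets you evaluate $\phi(c^2)$ for arbitrary $c$ in one stroke, reducing to the four monomial squares $X^4$, $XYXY$, $YXYX$, $Y^4$, none of which is a rotation of $XXYY$. In substance, the paper's $K$ encodes the necklace functional of $XYXY$ while yours is the necklace functional of $XXYY$; your arrangement buys uniformity (no need to identify $c$ first) and a much smaller finite verification, while the paper's arrangement isolates a concrete non-commutator element, a formulation that is echoed later in the paper (compare the subgroup $A_4^0$ in Lemma \ref{thelemma}).
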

\begin{proof} Let $\overline{A}:= \Z/2\{X,Y\}$. As in the case of $A$, $\overline{A} =\oplus_{i\geq 0} \overline{A}_i$ 
where $\overline{A}_i$ is $\Z/2$ span of words in $X,Y$ of length precisely $i$. We let $F^n\overline{A}:=\oplus_{i\geq n} A_i.$
In order to prove the lemma it is enough to arrive at a contradiction by assuming the existence of an element $c\in \overline{A}$, such that  
$$ \hspace{3cm} X^2Y^2 = c^2 \ \ \syn{mod} \ ( [\overline{A},\overline{A}]+F^5\overline{A}). \ \hspace{3cm} \ \ \ \ {(\star\star)}$$ 
Without loss of generality we may assume that $c=a_0+a_1+a_2+a_3+a_4$ where $a_i\in \overline{A}_i$ are its homogeneous components. But then 
\begin{align*}
 c^2 & = a_0^2+a_1^2 + a_2^2 + a_3^2 + a_4^2 \ \ \syn{mod} \  ([\overline{A},\overline{A}]+ F^5 \overline{A})\\
       & = a_0^2 + a_1^2 + a_2^2 \ \ \syn{mod}  \ ([\overline{A},\overline{A}]+ F^5 \overline{A}).
\end{align*}
Thus, $(\star \star)$ continues to hold with   $c$ replaced by $a_0+a_1 + a_2$. Hence without loss of generality we may assume $ c= a_0+a_1 + a_2$. \\ 

\noindent \underline{Step 1}: Let $\tilde{A}:= \Z/2[X,Y]$ be the commutative polynomial ring in variables $X,Y$. This is quotient of the non-commutative polynomial ring $\overline{A}=\Z/2\{X,Y\}$, i.e. we have a natural surjective map 
$$ \overline{A} \xrightarrow{\phi} \tilde{A}$$
with $[\overline{A},\overline{A}] \subset \Ker(\phi)$. The homomorphism $\phi$ preserves grading. 
We let $F^n\tilde{A}:= \oplus_{i\geq n} \tilde{A}_i$. Applying $\phi$ to the congruence $(\star \star)$ above, we get the following congruence in $\tilde{A}$  
\begin{align*}
 X^2Y^2 & = \phi(c)^2 \ \ \syn{mod} \ F^5\tilde{A}\\
               & = \phi(a_0)^2 + \phi(a_1)^2 + \phi(a_2^2) \ \ \syn{mod} \ F^5\tilde{A}
\end{align*}
Note that $\phi(a_i) \in \tilde{A}_i$. Thus both sides of the congruence belong to the subgroup $\displaystyle{\oplus_{i=0}^4 \tilde{A}_i}$, which has trivial intersection with $F^5\tilde{A}$. Therefore the congruence gives us an  equality in $\tilde{A}$
$$ X^2Y^2 = \phi(a_0)^2 + \phi(a_1)^2 + \phi(a_2^2).$$
Further, since $X^2Y^2 \in \tilde{A}_4$, the only possibility is $\phi(a_0)=\phi(a_1)=0$ and $\phi(a_2)=XY$. 
Note that $\phi_{| \overline{A}_i}: \overline{A}_i \longrightarrow \tilde{A}_i$ is an isomorphism for $i=0,1$. Thus $$a_0=a_1=0.$$
Moreover, 
$$\Ker\big(\overline{A}_2\xrightarrow{\phi_{|\overline{A}_2}} \tilde{A}_2\big)=\Z/2\ \text{span of }  XY-YX.$$ 
Thus $\phi(c)=\phi(a_2)=XY$ implies that 
$$ c = XY + \epsilon [X,Y] \ \ \ \text{for some }  \epsilon \in \Z/2.$$ 
This implies  
$$c^2    = (XY)^2 + \epsilon^2( XY-YX)^2 \ \ \syn{mod} \ [\overline{A},\overline{A}].$$

\noindent \underline{Step 2}:  
\begin{align*}
( XY-YX)^2  &  =  (XY)^2 + (YX)^2  \ \ \syn{mod} \ [\overline{A},\overline{A}] \\
          & = [X,YXY] \ \ \syn{mod} \ [\overline{A},\overline{A}] \\
        & = 0 \ \ \syn{mod} \ [\overline{A},\overline{A}]
\end{align*}
Thus $$ c^2 = (XY)^2 \ \ \syn{mod} \ [\overline{A},\overline{A}].$$
Therefore, without loss of generality, we may assume $c=XY$. \\

\noindent \underline{ Step 3}: Now $(\star \star) $ gives us 
$$ X^2Y^2 - XYXY = u + v $$
for some $u\in [\overline{A},\overline{A}]$ and $v\in F^5\overline{A}$. 
Write $u=\sum_i u_i$ with $u_i\in \overline{A}_i \ \forall \ i$. Comparing the degree $4$ homogeneous components of the above equation, we get 
$$ X^2Y^2-XYXY = u_4.$$
Note that $[\overline{A},\overline{A}]$ is graded subgroup of $\overline{A}$. Thus $u_4\in [\overline{A},\overline{A}]$. 
Let $S \subset \overline{A}_4$ be a subset defined by  
$$ S:= \big\{ \text{all words in } X,Y \text{ of length } 4, \text{except } XYXY \text{ and } YXYX  \big\} \Union \big\{ XYXY-YXYX \big\}.$$
Let $K\subset \overline{A}_4$ be the $\Z/2$-subspace spanned by $S$. Clearly 
$$ X^2Y^2-XYXY \notin K.$$
Thus to finish the proof, it suffices to show $u_4\in K$. This will be done in the next step. \\

\noindent \underline{ Step 4}: $u_4 \in [\overline{A},\overline{A}]\intersection \overline{A}_4$. It is elementary to check that $[\overline{A},\overline{A}]\intersection \overline{A}_4$ is generated by the set 
$$ \big\{ a_ib_j - b_ja_i \ | \  (i,j)= (1,3) \text{ or } (2,2) , \text{ with } \ a_i, b_j \text{ are words of length } i,j \text{ respectively}  \big\}.$$
It therefore suffices to show that all elements of the form $a_ib_j-b_ja_i$ of the above set belong to $K$. 
The possibilities for $a_i$ and $b_j$ appearing in the above set are listed in the table below: \\
\begin{center}
\begin{tabular}{ |c| c | }
\hline 
$a_1$ & $X,Y$  \\
\hline 
$b_3$ & $X^3,Y^3,X^2Y,XY^2,YX^2,Y^2X, XYX,YXY$ \\
\hline 
$a_2$ or $b_2$ & $X^2,Y^2,XY,YX$ \\
\hline
\end{tabular}
\end{center}
For all possible combinations of $a_1$ and $b_3$, one explicitly checks that $a_1b_3-b_3a_1$ always belongs to $K$. Similarly for all possible combinations of $a_2$ and $b_2$ from the above table, one checks that $a_2b_2-b_2a_2$ always belongs to $K$. This finishes the proof. 

\end{proof}

%%%%%%%%%%%%%%%%%%%%%%%%%%%%%%%%%%%%%%%%%%%%

\section{Proof of Theorem \ref{thm2}}

The goal of this section is to prove Theorem \ref{thm2}. Define a set map $\Omega: A^{\N_0} \to A^{\N_0}$  by 
$$\Omega(\underline{a}) = (\omega_0(\underline{a}), \omega_1(\underline{a}), ...)$$ 
where $\omega_n(\underline{a})  = a_0^{p^n}+pa_1^{p^{n-1}}+\cdots+p^na_n$ are the Witt polynomials.
Recall that $X(A)\subset \A^{\N_0}$ is the closed subgroup generated by elements of the form $V^i\big(\langle a_1\rangle \cdots \langle a_r \rangle\big)$ where $V$ and $\langle \ \rangle$ are as in the construction by Cuntz and Deninger summarized in the previous section.

\begin{lemma} \label{xqvec} 
Image of $\A^{\N_0}\xrightarrow{\Omega} \A^{\N_0}$ is contained in $X(A)$. In fact, we have the following equality in $X(A)$:
$$  \Omega(a_0,a_1,...)= \sum_{i=0}^\infty V^i \langle a_i \rangle  \ \ \ \forall \  
(a_0,a_1,...) \in A^{\N_0}.$$
\end{lemma}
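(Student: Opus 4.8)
The plan is to verify the claimed equality coordinatewise in $A^{\N_0}$ and then invoke the fact that $X(A)$ is a \emph{closed} subgroup to deduce the containment. Since $X(A)\subset A^{\N_0}$ consists of genuine tuples (unlike $W(A)$, which is a quotient), there is no need to pass through the ghost map as in Lemma \ref{wagen}; the components can be read off directly from the definitions of $V$ and $\langle \ \rangle$ in the Cuntz--Deninger setup.

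First I would compute $V^i\langle a_i\rangle$ explicitly. Recall that $\langle a_i\rangle = (a_i, a_i^p, a_i^{p^2}, \dots)$ and that here $V(b_0,b_1,\dots) = p(0,b_0,b_1,\dots)$ carries a factor of $p$: applying $V$ once shifts the tuple one slot to the right and multiplies every entry by $p$, so applying it $i$ times yields a tuple whose first $i$ entries vanish and whose entry in position $n \geq i$ equals $p^i a_i^{p^{\,n-i}}$. In symbols, the $n$-th component of $V^i\langle a_i\rangle$ is $0$ for $n<i$ and $p^i a_i^{p^{\,n-i}}$ for $n\geq i$. I expect this indexing bookkeeping — tracking how the shift interacts with the $p$-power exponents of the Teichm\"uller entries — to be the only place that requires any care.

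Next I would sum over $i$. For a fixed position $n$, only the terms with $i\leq n$ contribute (all later terms have a zero in that slot), so the $n$-th component of $\sum_i V^i\langle a_i\rangle$ is the finite sum $\sum_{i=0}^n p^i a_i^{p^{\,n-i}} = a_0^{p^n} + p a_1^{p^{n-1}} + \cdots + p^n a_n$, which is precisely $\omega_n(\underline a)$. In particular the infinite sum converges in the product topology, because in each coordinate it stabilizes after finitely many terms (the number of leading zeros of $V^i\langle a_i\rangle$ grows with $i$). This establishes $\Omega(\underline a) = \sum_{i=0}^\infty V^i\langle a_i\rangle$ as an identity of elements of $A^{\N_0}$.

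Finally, each summand $V^i\langle a_i\rangle$ is of the form $V^m(\langle a_1\rangle\cdots\langle a_r\rangle)$ with $m=i$ and $r=1$, hence lies in $X(A)$ by its very definition; since $X(A)$ is a closed subgroup of $A^{\N_0}$, it contains every finite partial sum and, being closed, also the product-topology limit of these partial sums. Therefore $\Omega(\underline a)\in X(A)$, which simultaneously yields the asserted equality inside $X(A)$ and the containment of the image of $\Omega$ in $X(A)$.
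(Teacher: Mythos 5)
Your proof is correct and takes essentially the same approach as the paper: both arguments verify the identity $\Omega(\underline{a})=\sum_{i=0}^\infty V^i\langle a_i\rangle$ by a direct coordinatewise computation (decomposing the tuple of Witt polynomials into the shifted, $p$-scaled Teichm\"uller tuples) and then conclude membership in $X(A)$ from the fact that it is a closed subgroup containing each generator $V^i\langle a_i\rangle$. Your write-up merely makes explicit the convergence of the partial sums and the use of closedness, which the paper leaves implicit.
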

\begin{proof}
\begin{align*}
\Omega(a_0,a_1,...) & = (\omega_0(\underline{a}), \omega_1(\underline{a}), ...) \\
	  & = (a_0, a_0^p+pa_1, a_0^{p^2}+pa_1^p+p^2a_2, ....)\\
	  &= (a_0,a_0^p,a_0^{p^2},...) + p(0,a_1,a_1^p,a_1^{p^2},...) + p^2 (0,0,a_2,a_2^p,a_2^{p^2},...)+...\\
	  &= \sum_{i=0}^\infty V^i \langle a_i \rangle
\end{align*}
\end{proof}

\noindent Thus, by the above lemma, $\Omega$  induces a map from $\A^{\N_0}\to X(A)$ which will also be denoted by $\Omega$.

\begin{lemma} \label{pin} Let $A$ be such that $\ca$ is p-torsion free. Suppose there exists a continuous map $\Phi: W(A) \to HH_0(X(A))$ which commutes with $\langle \ \rangle$ and  $V$. 
Then the following diagram commutes 
$$\xymatrix{
& & X(A) \ar@{->>}[d] \\
& &  HH_0(X(A)) \\
A^{\N_0} \ar[rr]^{q} \ar[rruu]^{\Omega} & &  W(A)\ar[u]_{\Phi}
}$$ 
In other words, $\Phi$ is necessarily given by 
$$ \Phi(q(\underline{a})) = \Omega (\underline{a})  \ \syn{mod}\  \overline{[X(A),X(A)]} .$$
\end{lemma}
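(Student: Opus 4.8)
The plan is to determine $\Phi$ completely on the topological generators $V^i\langle a\rangle$ of $W(A)$ and then extend the resulting formula to all of $W(A)$ by continuity and additivity. The two facts that make this possible are already available: Lemma \ref{wagen}(i) writes every element $q(\underline{a})=(a_0,a_1,\dots)\in W(A)$ as the convergent series $\sum_{i=0}^\infty V^i\langle a_i\rangle$ (this is exactly where the hypothesis that $\ca$ is $p$-torsion free enters), and Lemma \ref{xqvec} identifies the corresponding series in $X(A)$ with $\Omega(\underline{a})$.

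First I would evaluate $\Phi$ on a single generator. Since $\Phi$ is compatible with the Teichm\"uller map, $\Phi(\langle a\rangle)$ is the class of $\langle a\rangle\in X(A)$ in $HH_0(X(A))$, for every $a\in A$. Since $\Phi$ commutes with $V$, and since $V$ is well defined on $HH_0(X(A))$ by Corollary \ref{vinduces}, applying $V^i$ to both sides yields
$$\Phi\big(V^i\langle a\rangle\big)=V^i\langle a\rangle \ \syn{mod}\ \overline{[X(A),X(A)]}$$
for all $i\in\N_0$ and $a\in A$. This pins $\Phi$ down on the generating set.

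Finally, for an arbitrary element I would substitute the expansion of Lemma \ref{wagen}(i) and push $\Phi$ through the sum:
$$\Phi(q(\underline{a}))=\Phi\Big(\sum_{i=0}^\infty V^i\langle a_i\rangle\Big)=\sum_{i=0}^\infty \Phi\big(V^i\langle a_i\rangle\big)=\sum_{i=0}^\infty V^i\langle a_i\rangle \ \syn{mod}\ \overline{[X(A),X(A)]}.$$
By Lemma \ref{xqvec} the last series equals $\Omega(\underline{a})$ in $X(A)$, which is precisely the asserted formula and the commutativity of the diagram. The only step requiring care is the middle equality above: pulling $\Phi$ inside the infinite sum uses that the partial sums converge in $W(A)$ (they do, since $V^i$ pushes terms into ever higher truncation levels) together with the fact that $\Phi$ is continuous and additive (as $\Phi$ is a continuous group homomorphism), so that $\Phi$ of the limit is the limit of the images of the partial sums and $\Phi$ distributes over each finite partial sum. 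Everything else is a direct assembly of the identities established above, so this interchange is the main, and essentially only, obstacle.
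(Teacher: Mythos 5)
Your proposal is correct and follows essentially the same route as the paper: expand $q(\underline{a})=\sum_{i}V^i\langle a_i\rangle$ via Lemma \ref{wagen}(i), pull $\Phi$ through the series by continuity and additivity, evaluate $\Phi(V^i\langle a_i\rangle)$ using compatibility with $V$ and $\langle\ \rangle$, and identify the resulting series with $\Omega(\underline{a})$ by Lemma \ref{xqvec}. Your explicit attention to the interchange of $\Phi$ with the infinite sum (and the additivity it requires) is a point the paper's proof compresses into a one-line justification, but it is the same argument.
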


\begin{proof} For $\underline a = (a_0,a_1,...)\in \A^{\N_0}$ we have 
\begin{align*}
\Phi \big( q(\underline{a})\big) & =  \Phi\Big( \sum_{i=0}^\infty V^i\langle a_i \rangle \Big) &  (\text{by lemma \ref{wagen}(i) } ) \\
                          & =   \sum_{i=0}^\infty \Phi(V^i\langle a_i \rangle ) & (\text{by continuity of $\Phi$}) \\
                          & = \Omega (\underline{a})  \ \syn{mod}\  \overline{[X(A),X(A)]} & (\text{by Lemma \ref{xqvec} and since $\Phi$ is continuous and preserves $V$, $\langle \ \rangle$})
\end{align*}
This proves the lemma. 
\end{proof}

%\begin{remark} As by Lemma \ref{wagen}(ii), $W(A)$ is topologically generated by the set $\{V^n(\langle a \rangle) \ | \ n\in \N_0, a\in A \}$. Thus there is at most one homormorphism from $W(A)\longrightarrow HH_0(E(A))$ possible which is compatible with $V$ and $\langle \ \rangle$. It is possible that Lemma \ref{wagen} and consequently \ref{pin}  holds without the assumption that $\ca$ is $p$-torsion free. However, we do not strive for this generality as it will not be used in this paper. \end{remark}

To state the next result we fix the following notation. Let $ X(A) \xrightarrow{\ \gamma \ } \big(\ca\big)^{\N_0}$ denote the composition 
$$   X(A) \inj A^{\N_0} \to \Big(\ca\Big)^{\N_0}.$$
Let $ E(A) \xrightarrow{\ \eta\ } \big(\ca \big)^{\N_0}$ denote the composition 
$$ E(A) \xrightarrow{\ \pi\ } X(A) \xrightarrow{\ \gamma\ } \Big(\ca\Big)^{\N_0}$$
where $\pi$ is the surjection induced by $X(\Z A) \to X(A)$ (see subsection \ref{eaxa}). Since $\eta$ and $\gamma$  are ring homomorphisms, and the target is a commutative ring, we have induced maps 
$$ HH_0(X(A)) \xrightarrow{ \ \overline{\gamma} \ } \Big(\ca\Big)^{\N_0},$$
$$ HH_0(E(A)) \xrightarrow{ \ \overline{\eta} \ } \Big(\ca\Big)^{\N_0}.$$
The maps  $\eta$ and $\overline{\eta}$ are analogous to the ghost map 
$$ W(A) \xrightarrow{\ \overline{\omega}\ } \Big( \ca \Big)^{\N_0}.$$
In the case when $\ca$ is is $p$-torsion free, $\overline{\omega}$ is always injective (see \cite[page 56]{h2}). However as the following theorem  shows, this is not the case for  $\overline{\eta}$. 

\begin{theorem}\label{notinjective} Let $A = \Z\{X,Y\}$, and $p=2$.
The map $$HH_0(E(A)) \xrightarrow{\ \overline{\eta} \ } \Big(\ca\Big)^{\N_0}$$ is not injective. 
\end{theorem}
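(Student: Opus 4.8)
The plan is to reduce the statement to the non-injectivity of $\overline{\gamma}$ on $HH_0(X(A))$ and then to exhibit one explicit nonzero class in the kernel. Since $\pi\colon E(A)\to X(A)$ is a surjection compatible with $V$ and $\langle\ \rangle$ (subsection \ref{eaxa}), the induced map $\rho\colon HH_0(E(A))\to HH_0(X(A))$ is surjective and $\overline{\eta}=\overline{\gamma}\circ\rho$. Hence any nonzero element of $\Ker(\overline{\gamma})$ lifts along $\rho$ to a nonzero element of $\Ker(\overline{\eta})$, and it suffices to prove that $\overline{\gamma}\colon HH_0(X(A))\to(\ca)^{\N_0}$ is not injective. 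I propose to use
$$ w:=\langle XY\rangle-\langle YX\rangle\in X(A).$$
Its $i$-th component is $(XY)^{2^i}-(YX)^{2^i}$, and since the words $(XY)^{2^i}$ and $(YX)^{2^i}$ are cyclic rotations of one another this component lies in $[A,A]$; thus $\gamma(w)=0$. It therefore remains only to show that $w\notin\overline{[X(A),X(A)]}$, for then $[w]$ is a nonzero element of $\Ker(\overline{\gamma})$.

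I would argue in close parallel with the proof of Theorem \ref{thm1}. Suppose $w\in\overline{[X(A),X(A)]}$. By Lemma \ref{commutator-generators} we may write $w$ as a convergent sum of generators $2^{m_j}V^{n_j}\big([\langle a_{j,1}\rangle\cdots,\langle b_{j,1}^{2^{n_j-m_j}}\rangle\cdots]\big)$ with $m_j\le n_j$. Reducing modulo $2$ kills every generator with $m_j\ge 1$, and, because $V$ carries a factor of $2$, every generator with $n_j\ge 1$; so, writing $\overline{A}:=\Z/2\{X,Y\}$, we obtain an equality $\overline{w}=\sum_j[\alpha_j,\beta_j]$ in $X(\overline{A})$, where each $\alpha_j=\langle a_{j,1}\rangle\cdots\langle a_{j,s_j}\rangle$ and $\beta_j=\langle b_{j,1}\rangle\cdots\langle b_{j,t_j}\rangle$ is a product of Teichm\"uller elements of $\overline{A}$. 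As each component takes values in the discrete group $\overline{A}$, only finitely many terms contribute to any fixed component.

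The core of the argument is to compare the component of index $1$ in degree $4$. The index-$1$ component of $\overline{w}$ is $(XY)^2+(YX)^2=XYXY+YXYX$, which is homogeneous of degree $4$ and contains the monomial $XYXY$. On the other side, the index-$1$ component of $[\alpha_j,\beta_j]$ is $[A_j^{(2)},B_j^{(2)}]$, where $A_j^{(2)}:=a_{j,1}^2\cdots a_{j,s_j}^2$ and $B_j^{(2)}:=b_{j,1}^2\cdots b_{j,t_j}^2$. For $a\in\overline{A}$ write $a=\lambda+a_{(1)}+(\deg\ge 2)$ with $\lambda\in\Z/2$ and $a_{(1)}\in\overline{A}_1$; then $a^2=\lambda+a_{(1)}^2+(\deg\ge 3)$, since the degree-$1$ part $2\lambda a_{(1)}$ vanishes. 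Consequently $A_j^{(2)}$ has vanishing degree-$1$ part, and its degree-$2$ part $(A_j^{(2)})_2$ lies in $W$, the $\Z/2$-subspace of $\overline{A}_2$ spanned by the squares of linear forms, i.e. by $X^2,\ Y^2,\ XY+YX$. Expanding $[A_j^{(2)},B_j^{(2)}]$ in degree $4$, the scalar$\,\times\,$degree-$4$ cross terms cancel in characteristic $2$ and the degree-$1\times$degree-$3$ terms vanish, leaving exactly $[(A_j^{(2)})_2,(B_j^{(2)})_2]\in[W,W]$. A direct check on the three basis brackets $[X^2,Y^2]$, $[X^2,XY+YX]$, $[Y^2,XY+YX]$ shows that no element of $[W,W]$ involves the monomial $XYXY$. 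Hence the degree-$4$ part of the index-$1$ component of $\sum_j[\alpha_j,\beta_j]$ cannot equal $XYXY+YXYX$, a contradiction. This shows $w\notin\overline{[X(A),X(A)]}$ and completes the proof.

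The step I expect to be the main obstacle is precisely the control of the degree-$4$ part of the index-$1$ component when the Teichm\"uller arguments $a_{j,k},b_{j,l}$ are \emph{inhomogeneous}: a priori such arguments might let a single generator contribute the forbidden monomial $XYXY$ (for instance through $\langle XY\rangle$, whose index-$1$ component is $XYXY$). The resolution is the low-degree identity $a^2\equiv\lambda+a_{(1)}^2\pmod{F^3\overline{A}}$, which forces the relevant degree-$2$ factors into the space $W$ of symmetric quadratics and thereby confines the entire degree-$4$ contribution to $[W,W]$; this plays here the role that Lemma \ref{xyc} plays in the proof of Theorem \ref{thm1}.
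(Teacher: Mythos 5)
Your overall strategy is sound, and it differs from the paper's in one genuinely useful way: the choice of witness element. The paper takes $r=\sR(XY-YX,0,0,\dots)$, which requires importing Hesselholt's recursive construction $\sR$ of \ref{mapr} and Lemma \ref{omegar0} just to know that the ghost components vanish; your $w=\langle XY\rangle-\langle YX\rangle$ gets this vanishing for free, since $(XY)^{2^i}-(YX)^{2^i}=[X,\,Y(XY)^{2^i-1}]\in[A,A]$, so the whole apparatus of \ref{mapr} and Lemma \ref{omegar0} is bypassed. After that, your degree-$4$ analysis of the index-$1$ component is, in different clothing, exactly the paper's Lemma \ref{thelemma}: the paper shows that the index-$1$ component of anything in $\overline{[X(A),X(A)]}$ lies in $H=A_4^0+F^5A+2A$, and the heart of its proof is the same observation you make --- after discarding multiples of $2$ and terms of degree $\geq 5$, the only surviving contribution is a commutator of squares of linear forms (the paper's $[\alpha_1^2,\beta_1^2]$, your $[W,W]$), which has no $XYXY$ or $YXYX$ term. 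So the two non-membership criteria have the same mathematical content; yours packages it as ``degree-$4$ part mod $2$ lies in $[W,W]$'' rather than ``lies in $H$''. The reduction to $\overline{\gamma}$ via surjectivity of $HH_0(E(A))\to HH_0(X(A))$ is identical in both proofs.

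One step as written does need repair: the claim that $w\in\overline{[X(A),X(A)]}$ can be written as a convergent series of the generators of Lemma \ref{commutator-generators}. In a product of discrete groups, the closure of the subgroup generated by a set $S$ is in general strictly larger than the set of sums of convergent series with terms in $\pm S$. For instance, in $\Z^{\N_0}$ with $S=\{e_0+e_n \mid n\geq 1\}$ ($e_i$ the standard basis vectors), the element $e_0$ lies in the closure of the generated subgroup, but any series of terms $\pm(e_0+e_{n_j})$ converging in the product topology must be finite (every term has $0$-th coordinate $\pm 1$), and no finite signed sum of such terms equals $e_0$. So the series representation is unjustified. The fix is cheap, and it is precisely how Lemma \ref{thelemma} is structured: since your criterion involves only the index-$1$ component, let $C\subset A^{\N_0}$ be the set of all $x$ such that the degree-$4$ homogeneous part of $x_1$ reduced mod $2$ lies in $[W,W]$. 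Then $C$ is a closed subgroup of $A^{\N_0}$ (preimage of a subgroup of the discrete group $\overline{A}_4$ under continuous additive maps); your computation shows every generator from Lemma \ref{commutator-generators} lies in $C$ (those with $m\geq1$ or $n\geq1$ because they are divisible by $2$, those with $m=n=0$ by the $[W,W]$ argument), hence $\overline{[X(A),X(A)]}\subset C$, while $w\notin C$. Alternatively, density gives a \emph{finite} signed sum of generators agreeing with $w$ in coordinates $0$ and $1$, and your computation applied to that finite sum yields the contradiction. With this adjustment your proof is complete and, thanks to the simpler witness, shorter than the paper's.
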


\noindent To prove this theorem we need the following lemma and a construction from \cite{h1} (see \ref{mapr}). 

\begin{lemma}\label{thelemma} Let $A=\Z\{X,Y\}$ and $p=2$. 
Let $A_4^0 \subset A_4$ be the free subgroup generated by  all words of length $4$ in $X,Y$ except the words $XYXY$ and $YXYX$. Let $H:= A_4^0 + F^5A + 2A$. Then $$(x_0,x_1,x_2,...)\in \overline{[X(A),X(A)]}\implies x_1  = 0 \ \syn{mod} \ H.$$
\end{lemma}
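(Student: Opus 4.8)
The plan is to understand the structure of $\overline{[X(A),X(A)]}$ using the explicit generators from Lemma \ref{commutator-generators}, then extract the second component (index $1$) of a general element and show it lands in $H$ modulo the relevant relations. First I would recall from Lemma \ref{commutator-generators} that $\overline{[X(A),X(A)]}$ is the closed subgroup generated by elements of the form
$$ p^m V^n\big( [\langle a_1 \rangle \cdots \langle a_s \rangle, \langle b_1^{p^{n-m}} \rangle \cdots \langle b_t^{p^{n-m}} \rangle] \big), \quad m \le n. $$
Since $H$ already contains $2A = pA$, any generator with $m \ge 1$ contributes a factor of $2$ and so its $x_1$ component vanishes modulo $H$; likewise any generator with $n \ge 2$ has its first two components equal to zero (the Verschiebung shifts by $n$), so its $x_1$ component is $0$. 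Thus only the generators with $m = 0$ and $n \in \{0,1\}$ can contribute nontrivially modulo $H$, and I would reduce the computation to these.

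\textbf{The surviving cases.}
For $n = 0$ (hence $m = 0$), the generator is $[\langle a_1 \rangle \cdots \langle a_s \rangle, \langle b_1 \rangle \cdots \langle b_t \rangle]$, whose second component is the commutator of the corresponding products of $2$-powers, i.e. an element of $[A,A] \cap A_{\ge 1}$ in the appropriate degree; I would check that after squaring (the entries of $\langle a\rangle$ are $a, a^2, a^4, \dots$) the index-$1$ component is a commutator of squares and hence lies in $H$ provided it is not of the forbidden shape. For $n = 1$, $m = 0$, the generator is $V\big([\langle a_1\rangle\cdots, \langle b_1^{2}\rangle \cdots]\big)$; since $V$ shifts, its index-$1$ component is the \emph{index-$0$} component of the bracket, namely $[a_1 \cdots a_s, b_1^{2}\cdots b_t^{2}]$, which is a commutator and so lies in $[A,A] \subset H$ as soon as its degree is handled. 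The key point to verify is that the only degree-$4$ words escaping $A_4^0$, namely $XYXY$ and $YXYX$, cannot arise with nonzero coefficient modulo $2$ from these commutator expressions — this is precisely the role of the special generator $XYXY - YXYX$ in the set $S$ of Lemma \ref{xyc}, since a commutator contributes these two words with equal coefficients.

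\textbf{Taking closures and finishing.}
Because $H$ is defined to contain $F^5 A$ and $2A$, it is a \emph{closed} (indeed open) subgroup of $A$ in the filtration topology, so the statement ``$x_1 \equiv 0 \bmod H$'' is preserved under the closure operation: if it holds for all the generators and is additive, it holds for every element of the closed subgroup they generate. I would therefore first prove the congruence for each generator type above, then note that $x \mapsto (x_1 \bmod H)$ is a continuous homomorphism $X(A) \to A/H$ (with $A/H$ discrete, being a finite-rank $\Z/2$-group after truncating at degree $5$), so its kernel is closed and contains all generators, hence contains $\overline{[X(A),X(A)]}$.

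\textbf{Main obstacle.}
The hard part will be the degree bookkeeping in the surviving $n \in \{0,1\}$ cases: I must track exactly which length-$4$ words appear in the index-$1$ component of a general bracket $[\langle a_1\rangle \cdots \langle a_s\rangle, \langle b_1\rangle \cdots \langle b_t\rangle]$ once all entries are replaced by their $2$-power images and everything is reduced modulo $2$, and confirm that $XYXY$ and $YXYX$ only ever occur together with equal coefficients (so their difference, which \emph{is} in $A_4^0 + (XYXY-YXYX)\Z/2 \subset H$, is all that survives). This is essentially the same commutator-in-degree-$4$ analysis carried out in Step 4 of Lemma \ref{xyc}, and I expect to reuse that table of $a_i b_j - b_j a_i$ computations to close the argument.
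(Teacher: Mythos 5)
Your overall skeleton is fine: since $H\supset 2A+F^5A$, the map $x\mapsto x_1 \bmod H$ is a continuous homomorphism $X(A)\to A/H$ with closed kernel, so it suffices to check generators, and your disposal of the cases $m\geq 1$ and $n\geq 2$ is correct. But there are two errors, one minor and one fatal. The minor one: for $n=1$, $m=0$ you claim the relevant component $[a_1\cdots a_s,\,b_1^2\cdots b_t^2]$ lies in $H$ because ``$[A,A]\subset H$''. That containment is false: $[X,Y]=XY-YX$ has degree $2$ and is not in $A_4^0+F^5A+2A$. The case is nevertheless harmless, for a different reason you did not invoke: in the Cuntz--Deninger construction $V$ is not a bare shift but satisfies $V(c_0,c_1,\dots)=2(0,c_0,c_1,\dots)$, so \emph{every} generator with $n\geq 1$ (not only $m\geq 1$) is divisible by $2$, and its index-$1$ entry lies in $2A\subset H$. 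Thus only $m=n=0$ survives.

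The fatal error is in that surviving case. You propose to show that $XYXY$ and $YXYX$ occur ``with equal coefficients'' in $[a_1^2\cdots a_s^2,\,b_1^2\cdots b_t^2]$ and to conclude because ``their difference is in $A_4^0+(XYXY-YXYX)\Z/2\subset H$''. That containment is false: by construction $A_4^0$ omits exactly the words $XYXY$ and $YXYX$, and $XYXY-YXYX\notin A_4^0+F^5A+2A=H$ (compare coefficients of $XYXY$). Indeed it had better not be in $H$: the application of this lemma in Theorem \ref{notinjective} rests precisely on $\omega_1(r)\equiv -XYXY+YXYX$ being nonzero modulo $H$. For the same reason you cannot ``reuse the table'' from Step 4 of Lemma \ref{xyc}: that computation only shows degree-$4$ commutators lie in $K$, and $K$ contains $XYXY-YXYX$ (for instance $[X,YXY]=XYXY-YXYX$), so membership in $K$ is strictly weaker than membership in $H$; a general element of $[A,A]\cap A_4$ is \emph{not} in $H$. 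What must be proved --- and this is the real content of the lemma --- is that commutators of \emph{squares} have coefficient zero, not merely equal coefficients, on $XYXY$ and $YXYX$. Concretely: observe $H$ is a two-sided ideal, so it suffices to show $a^2b^2\equiv b^2a^2 \bmod H$ for all $a,b\in A$; writing $a=\alpha_0+\alpha_1+\alpha_2$ and $b=\beta_0+\beta_1+\beta_2$ with components of degree $0$, degree $1$, and degree $\geq 2$, and working modulo $2A+F^5A$, everything collapses to $[\alpha_1^2,\beta_1^2]$ with $\alpha_1=m_1X+n_1Y$, $\beta_1=m_2X+n_2Y$; there the coefficient of $XYXY$ is $m_1n_1m_2n_2-m_2n_2m_1n_1=0$, and likewise for $YXYX$, so $[\alpha_1^2,\beta_1^2]\in A_4^0$. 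Your proposal contains neither this computation nor any valid substitute for it.
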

\begin{proof} 
By Lemma \ref{commutator-generators}, $\overline{[X(A),X(A)]}$ is the closed subgroup generated by the following elements of $X(A)$:
$$ \Big\{  2^mV^n\big( [ \langle a_1 \rangle \cdots  \langle a_s \rangle, \langle b_1^{2^{n-m}} \rangle \cdots \langle b_t^{2^{n-m}}\rangle ]   \big)\ | \  0\leq m\leq n\leq 1,  s,t\in \N, a_i,b_j \in A   \Big\} $$
where recall that for $a\in A$, $\langle a \rangle= (a,a^2,a^{2^2},...)\in X(A)$.  
As $A_0$ is in the center of $A$ we have 
$$\langle c \rangle \langle d \rangle = \langle cd \rangle  \ \forall \ c\in A_0 \ \text{and } d\in A. $$ 
Thus, without loss of generality, we may restrict the $a_i, b_j$ in the generating set above to elements of $A\backslash A_0$. In other words  $\overline{[X(A),X(A)]}$ is the closed subgroup generated by the following elements of $X(A)$:
$$ \Big\{  2^mV^n\big( [ \langle a_1 \rangle \cdots  \langle a_s \rangle, \langle b_1^{2^{n-m}} \rangle \cdots \langle b_t^{2^{n-m}}\rangle ]   \big)\ | \  0\leq m\leq n\leq 1,  s,t\in \N, a_i,b_j \in A  \backslash A_0 \Big\} $$

Thus it is enough to prove the statement of the lemma when $(x_0,x_1,..)$ is an element of the generating set, i.e.  $$(x_0,x_1,....)= 2^mV^n\big( [ \langle a_1 \rangle \cdots  \langle a_s \rangle, \langle b_1^{2^{n-m}} \rangle \cdots \langle b_t^{2^{n-m}}\rangle ]   \big).$$
Clearly if $m> 0$ or if $n> 0$, the RHS is divisible by $2$ and since $2A\subset H$, we have $x_1\in H$. It thus remains to show that for  
 $$(x_0,x_1,...)=  [ \langle a_1 \rangle \cdots  \langle a_s \rangle, \langle b_1 \rangle \cdots \langle b_t\rangle ] ,  $$
where $s,t\geq 1$, $x_1\in H$. Equivalently, we need to show the following:
$$ \big[a_1^2\cdots a_s^2, b_1^2\cdots b_t^2\big] = 0 \ \ \syn{mod} \  H \ \ \ \forall \ a_i, b_i \in A. $$
To see this, we first observe that $H$ is a two sided ideal of $A$.  The above statement is equivalent to showing that  images of the elements $(a_1^2\cdots a_s^2)$ and $(b_1^2\cdots b_t^2)$ in the quotient ring $A/H$ commute. For this it is enough to show that for all $1\leq i\leq s$ and $1\leq j\leq t$, the images of the elements $a_i^2$ and $b_j^2$ in $A/H$ commute. In other words, we are reduced to showing  that for  elements $a,b\in A$, 
$$ a^2b^2=b^2a^2 \ \ \ \syn{mod} \ H.$$
Write 
\begin{align*}
a &=\alpha_0+\alpha_1+\alpha_2 \\
b &=\beta_0+\beta_1+\beta_2
\end{align*} where $\alpha_0,\beta_0\in A_0$, $\alpha_1,\beta_1\in A_1$ and $\alpha_2,\beta_2\in F^2A$.
Then, since $2A\subset H$, we have  
\begin{align*}
a^2 &  = \alpha_0^2 + \alpha_1^2  + \alpha_2^2+ \alpha_1\alpha_2 +\alpha_2\alpha_1 \ \ \syn{mod} \ H \\
b^2 & = \beta_0^2 + \beta_1^2  + \alpha_2^2+ \beta_1\beta_2 +\beta_2\beta_1 \ \ \syn{mod} \ H 
\end{align*}
We now explicitly calculate $a^2b^2-b^2a^2$ modulo $H$, while keeping in mind that $\alpha_0,\beta_0$ are in the center of $A$.  
\begin{align*}
a^2b^2-b^2a^2 & = \big(\alpha_0^2 + \alpha_1^2  + \alpha_2^2+ \alpha_1\alpha_2 +\alpha_2\alpha_1 \big) \big(\beta_0^2 + \beta_1^2  + \beta_2^2+ \beta_1\beta_2 +\beta_2\beta_1  \big) \\ 
                         & \ \ \ -  \big( \beta_0^2 + \beta_1^2  + \beta_2^2+ \beta_1\beta_2 +\beta_2\beta_1 \big) \big(\alpha_0^2 + \alpha_1^2  + \alpha_2^2+ \alpha_1\alpha_2 +\alpha_2\alpha_1 \big) \ \ \syn{mod} \ H \\
                         & = \alpha_1^2\beta_1^2-\beta_1^2\alpha_1^2 + ({\rm deg } \geq 5 \ {\rm terms}) \ \ \syn{mod} \ H \\
                         & = \alpha_1^2\beta_1^2-\beta_1^2\alpha_1^2  \ \ \syn{mod} \ H 
\end{align*}
Thus to finish the proof, it suffices to show that $\alpha_1^2\beta_1^2-\beta_1^2\alpha_1^2 \in A_4^0$. Since $\alpha_1,\beta_1\in A_1$, there exists integers $m_i,n_i$ for $i=1,2$ such that 
\begin{align*}
\alpha_1 & = m_1X + n_1Y \\
\beta_1 & = m_2X + n_2Y
\end{align*}
It is now elementary to check that in the expression of $\alpha_1^2\beta_1^2-\beta_1^2\alpha_1^2 $, which is a linear combination of words of length $4$ in $X$ and $Y$, the coefficients of the words $XYXY$ and $YXYX$ are zero. By definition of $A_4^0$, this shows that $\alpha_1^2\beta_1^2-\beta_1^2\alpha_1^2 \in A_4^0$. 
\end{proof}

%\begin{lemma}\label{alpha2}
%Let $A=\Z\{X,Y\}$ and $A_4^0, H$ be as in the statement of Lemma \ref{thelemma}. Then for any $\alpha \in F^2A$ we have $ \alpha^2 = 0 \ \syn{mod} \ H$.
%\end{lemma}

\begin{point} {\bf Hesselholt's construction of the map $[A,A]^{\N_0}\longrightarrow A^{\N_0}$}: \label{mapr}
In this subsection we quickly recall the construction of a map from $[A,A]^{\N_0}\longrightarrow A^{\N_0}$ by Hesselholt (see \cite[Page 114]{h1}). Everything recalled here is taken directly from \cite{h1} and nothing is new, except that instead of the non-unital polynomial ring $\tilde{\Z}\{X,Y\}$ we deal with the unital polynomial ring 
$\Z\{X,Y\}$.  Let $A=\Z\{X,Y\}$. Fix a prime $p$.  An element $x\in A$ can be uniquely written as 
$$ x= \sum_{{w}} n_{{w}} {w} \ \ \ , \ \ \  n_{{w}}\in \Z \ \text{ with almost all zero}$$
where the summation is over words ${w}$ in $X,Y$ (including the empty word). 
Define two words to be equivalent, if one can be obtained by a cyclic permutation of the other. An equivalence class of words will be called a circular word. One can show that $A/[A,A]$ is a free abelian group generated by circular words. Define a additive section of the quotient map $A\twoheadrightarrow A/[A,A]$  
$$ \sigma_0: \frac{A}{[A,A]} \longrightarrow A$$ 
which takes a circular word to the unique word in its equivalence class which is least for the lexicographic order.  Define an additive group homomorphism $A\xrightarrow{\phi}A$ by 
$$ \phi\big(  \sum_{w} n_{w} w\big) :=  \sum_{w} n_{w} w^p.$$
As observed in \cite{h1}, $\phi$ satisfies the following. 
\begin{lemma}\label{phi} $\phi([A,A])\subset [A,A]$ and for all $x\in A$, 
$ x^{p^n} = \phi(x^{p^{n-1}}) \ \syn{mod} \ \big(p^nA + [A,A] \big).$
\end{lemma}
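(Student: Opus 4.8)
The plan is to treat the two assertions separately: the inclusion $\phi([A,A])\subseteq[A,A]$ is a direct symmetry observation, while the congruence rests on a careful expansion of $x^{p^n}$ in which the cyclic group $\Z/p^n$ organizes the cross terms.

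For $\phi([A,A])\subseteq[A,A]$ I would argue on the additive generators $uv-vu$ of $[A,A]$, with $u,v$ words. Here $\phi(uv-vu)=(uv)^p-(vu)^p$, and the point is that the words $(uv)^p=uv\cdots uv$ and $(vu)^p=vu\cdots vu$ are cyclic permutations of one another (shifting $(uv)^p$ by the length of $u$ produces $(vu)^p$). Since two words differing by a cyclic permutation are congruent modulo $[A,A]$, we get $(uv)^p\equiv(vu)^p\pmod{[A,A]}$, hence $\phi(uv-vu)\in[A,A]$; additivity of $\phi$ finishes this part. This ``cyclic permutation $=$ commutator'' principle is the engine for the rest.

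For the congruence, write $x=\sum_w n_w w$ and, for a $k$-tuple of words $\mathbf u=(u_1,\dots,u_k)$, set $n_{\mathbf u}=\prod_i n_{u_i}$ and $m(\mathbf u)=u_1\cdots u_k$. Expanding, $x^{p^n}=\sum_{\mathbf w} n_{\mathbf w}\,m(\mathbf w)$ over all $p^n$-tuples $\mathbf w$. The group $\Z/p^n$ acts by cyclically permuting the entries of $\mathbf w$; this fixes $n_{\mathbf w}$ and replaces $m(\mathbf w)$ by a cyclic permutation, so by the principle above it leaves $n_{\mathbf w}m(\mathbf w)$ unchanged modulo $[A,A]$. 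Grouping by orbits, a free orbit has size $p^n$ and so contributes a multiple of $p^n$, which vanishes modulo $p^nA+[A,A]$. The non-free tuples are exactly the $p^{n-1}$-periodic ones, i.e. the $p$-fold repetitions of a $p^{n-1}$-tuple $\mathbf u$, for which $m(\mathbf w)=m(\mathbf u)^p$ and $n_{\mathbf w}=n_{\mathbf u}^p$. This gives the reduction
\[ x^{p^n}\ \equiv\ \sum_{\mathbf u}\, n_{\mathbf u}^{\,p}\, m(\mathbf u)^p \pmod{p^nA+[A,A]}, \]
the sum over all $p^{n-1}$-tuples $\mathbf u$. Since $\phi(x^{p^{n-1}})=\sum_{\mathbf u} n_{\mathbf u}\,m(\mathbf u)^p$, everything comes down to showing the difference $\sum_{\mathbf u}(n_{\mathbf u}^{\,p}-n_{\mathbf u})\,m(\mathbf u)^p$ lies in $p^nA+[A,A]$.

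This last congruence is the main obstacle, and the step I would treat most carefully. The naive Fermat estimate gives only $p\mid n_{\mathbf u}^p-n_{\mathbf u}$, which is far too weak, and one cannot repair it by a propagation ``$a\equiv b\Rightarrow a^p\equiv b^p$'' induction, because such propagation genuinely fails modulo $[A,A]$ (already $(XY-YX)^2\notin 4A+[A,A]$ for $p=2$). Instead I would group the difference by the cyclic action of $\Z/p^{n-1}$ on $p^{n-1}$-tuples: an orbit of size $p^{j}$ consists of tuples of period $p^{j}$, so its common coefficient has the form $n_{\mathbf u}=q^{\,p^{\,n-1-j}}$ for an integer $q$ (the coefficient product over one period), and the orbit contributes
\[ p^{\,j}\bigl(q^{\,p^{\,n-j}}-q^{\,p^{\,n-1-j}}\bigr)\,m(\mathbf u)^p \pmod{[A,A]}. \]
The elementary fact $v_p\bigl(q^{p^{s+1}}-q^{p^{s}}\bigr)\ge s+1$ (itself the one-line integer induction $a\equiv b\ (p^{s})\Rightarrow a^p\equiv b^p\ (p^{s+1})$), applied with $s=n-1-j$, shows this contribution lies in $p^{\,j+(n-j)}A=p^nA$. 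Summing over orbits puts the difference in $p^nA+[A,A]$, completing the proof. I expect the bookkeeping of orbit sizes against this period-refined divisibility to be the only delicate point; the remainder is formal.
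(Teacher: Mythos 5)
Your proof is correct. There is, however, no argument in the paper to compare it against: the paper does not prove this lemma at all, but quotes it from Hesselholt (``As observed in \cite{h1}\dots''), so your proposal supplies a proof where the paper only supplies a citation. The route you take --- expanding $x^{p^n}$ over $p^n$-tuples of words, using that a cyclic permutation of a word is trivial modulo $[A,A]$ so that free $\Z/p^n$-orbits contribute multiples of $p^n$, identifying the non-free tuples with $p$-fold repetitions of $p^{n-1}$-tuples to get $x^{p^n}\equiv\sum_{\mathbf{u}} n_{\mathbf{u}}^p\, m(\mathbf{u})^p$, and then killing the discrepancy $\sum_{\mathbf{u}}(n_{\mathbf{u}}^p-n_{\mathbf{u}})\,m(\mathbf{u})^p$ by a second orbit decomposition under $\Z/p^{n-1}$, trading orbit size $p^j$ against the valuation bound $v_p\bigl(q^{p^{n-j}}-q^{p^{n-1-j}}\bigr)\ge n-j$ --- is the standard proof of this congruence and is essentially the argument of \cite{h1} itself; your diagnosis that the second stage is where all the content lies (naive Fermat gives only a single factor of $p$) is exactly right. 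Two small points of precision, neither a gap: first, the implication $a\equiv b \ (p^{s})\Rightarrow a^p\equiv b^p\ (p^{s+1})$ is false at $s=0$, so the induction establishing $v_p\bigl(q^{p^{s+1}}-q^{p^{s}}\bigr)\ge s+1$ must take Fermat's little theorem as its base case and invoke the lifting implication only for $s\ge 1$ --- which is all your application needs; second, the identity $\phi(x^{p^{n-1}})=\sum_{\mathbf{u}} n_{\mathbf{u}}\, m(\mathbf{u})^p$ silently uses additivity of $\phi$ to handle distinct tuples $\mathbf{u}$ whose product words $m(\mathbf{u})$ coincide, since $\phi$ is defined on the word basis; this deserves one sentence but is immediate.
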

\vspace{2mm}
\noindent We now define a map 
$$ \sR : [A,A]^{\N_0} \longrightarrow A^{\N_0}$$
 $$\sR(\epsilon_0,\epsilon_1,...) := (r_0,r_1,...)$$ 
 where $r_0=\epsilon_0$ and for $n\geq 1$,  $r_n$'s are recursively defined as:
 $$ r_n = \epsilon_n - \sigma_0 \Big( p^{-n}\big( \omega_n(r_0,r_1,...,r_{n-1},0)-\phi(\omega_{n-1}(r_0,...,r_{n-1})) \big) \Big).$$
\end{point}
In the above formula, we note that by $\big( \omega_n(r_0,r_1,...,r_{n-1},0)-\phi(\omega_{n-1}(r_0,...,r_{n-1})) \big)$ we actually mean its image in $A/[A,A]$, which is divisible by $p^n$ by Lemma \ref{phi}. Moreover, since $A/[A,A]$ has no $p$-torsion, $p^{-n}\big( \omega_n(r_0,r_1,...,r_{n-1},0)-\phi(\omega_{n-1}(r_0,...,r_{n-1})) \big)$ is a  well defined element of $A/[A,A]$. \\ 

\noindent Finally we recall the part of \cite[Lemma 1.3.7]{h1} which remains true. 
\begin{lemma}\label{omegar0}
For any $\underline{\epsilon}\in [A,A]^{\N_0}$, $\sR (\underline{\epsilon})$ maps to zero via the ghost map 
$$ A^{\N_0} \xrightarrow{ \ \omega \ } \Big( \ca \Big)^{\N_0}.$$ 
\end{lemma}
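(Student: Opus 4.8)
The plan is to prove, by induction on $n$, that $\omega_n(r_0,\dots,r_n)\in[A,A]$ for every $n$; equivalently, writing $\bar{x}$ for the image of $x\in A$ in $A/[A,A]$, that $\overline{\omega_n(r_0,\dots,r_n)}=0$. The base case $n=0$ is immediate from the construction of $\sR$: one has $\omega_0(r_0)=r_0=\epsilon_0\in[A,A]$, so $\overline{r_0}=0$.

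For the inductive step I would isolate the single identity that makes the recursion work. First I would use the additivity of the ghost polynomial in its last slot, $\omega_n(r_0,\dots,r_n)=\omega_n(r_0,\dots,r_{n-1},0)+p^n r_n$, and then pass to $A/[A,A]$ and substitute the defining formula for $r_n$. Two facts make the $r_n$-contribution collapse: since $\epsilon_n\in[A,A]$ we have $\overline{\epsilon_n}=0$, and since $\sigma_0$ is an additive section of $A\twoheadrightarrow A/[A,A]$ we have $\overline{\sigma_0(z)}=z$ for every $z\in A/[A,A]$. Hence in $A/[A,A]$ the term $p^n\bar{r}_n$ equals $-\big(\overline{\omega_n(r_0,\dots,r_{n-1},0)}-\overline{\phi(\omega_{n-1}(r_0,\dots,r_{n-1}))}\big)$, and after cancelling the $\omega_n(r_0,\dots,r_{n-1},0)$ terms I expect to be left with exactly
$$\overline{\omega_n(r_0,\dots,r_n)}=\overline{\phi(\omega_{n-1}(r_0,\dots,r_{n-1}))}.$$

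To close the induction I would invoke Lemma \ref{phi}: because $\phi([A,A])\subset[A,A]$, the map $\phi$ descends to $A/[A,A]$, so the inductive hypothesis $\omega_{n-1}(r_0,\dots,r_{n-1})\in[A,A]$ forces $\phi(\omega_{n-1}(r_0,\dots,r_{n-1}))\in[A,A]$, i.e. the right-hand side above vanishes. This gives $\overline{\omega_n(r_0,\dots,r_n)}=0$ and completes the induction, and hence the lemma, since the coordinates of $\omega(\sR(\underline{\epsilon}))$ are precisely the $\overline{\omega_n(r_0,\dots,r_n)}$.

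The only genuinely delicate points—which I would flag rather than labor over, as they are already anticipated in the surrounding text—are bookkeeping ones: that the element $p^{-n}\big(\omega_n(r_0,\dots,r_{n-1},0)-\phi(\omega_{n-1}(r_0,\dots,r_{n-1}))\big)$ fed into $\sigma_0$ is a legitimate element of $A/[A,A]$, which needs the divisibility-by-$p^n$ half of Lemma \ref{phi} together with $A/[A,A]$ being $p$-torsion free, and the compatibility of $\sigma_0$ with reduction modulo $[A,A]$. I expect the main (though still mild) obstacle to be organizing the computation so that the passage through $\phi$ is exactly the step that consumes the inductive hypothesis; everything else is formal manipulation inside the torsion-free abelian group $A/[A,A]$.
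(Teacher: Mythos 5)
Your proof is correct: the induction showing $\overline{\omega_n(r_0,\dots,r_n)}=\overline{\phi(\omega_{n-1}(r_0,\dots,r_{n-1}))}=0$, using $\overline{\epsilon_n}=0$, the section property $\overline{\sigma_0(z)}=z$, and $\phi([A,A])\subset[A,A]$ from Lemma \ref{phi}, is exactly the computation that the recursive definition of $\sR$ is engineered to produce, and your flagged bookkeeping points (divisibility by $p^n$ via Lemma \ref{phi} and $p$-torsion-freeness of $A/[A,A]$) are precisely the ones the paper disposes of in the text following the definition of $\sR$. Note that the paper itself offers no proof of this lemma, citing \cite[Lemma 1.3.7]{h1} instead, so your argument simply supplies the standard verification behind that citation and matches it in approach.
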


\begin{proof} [{\bf Proof of Theorem \ref{notinjective}}]
As the map $HH_0(E(A)) \longrightarrow HH_0(X(A))$ is surjective, it is enough to show that the map 
$$ HH_0(X(A)) \xrightarrow{\ \overline{\gamma} \ } \Big(\ca \Big)^{\N_0}$$ is not injective.  Consider the following diagram
$$\xymatrix{
 & X(A) \ar[r]  & HH_0(X(A)) \ar[d]^{\overline{\gamma}}\\
A^{\N_0} \ar[ru]^{\Omega} \ar[rr]^{\omega} &  & \Big(\ca\Big)^{\N_0}
}$$ 
In order to prove the theorem it suffices to find an element $r=(r_0,r_1,...)\in A^{\N_0}$ such that $\Omega(r)\notin \overline{[X(A),X(A)]}$ and $\omega(r)=0$. Let 
$$ r:=(r_0,r_1,r_2,...):= \sR(XY-YX,0,0,...) \in A^{\N_0}.$$ 
By Lemma \ref{omegar0}, $\omega(r)=0$. It remains to show that $\Omega(r)\notin \overline{[X(A),X(A)]}$. By Lemma \ref{thelemma}, it suffices to show that the $1$-th component of $\Omega(r)$ (i.e. $\omega_1(r)$) is not in $A_4^0+F^5A+2A$. 
\begin{align*}
\omega_1(r)   & = {r_0}^2+2r_1\\
                & = (\rm {XY-YX})^2+2(-{\rm XYXY+XXYY})\\
                & =  -{\rm XYXY+YXYX-XYYX-YXXY+2XXYY}\\
                & = -{\rm XYXY+YXYX} \ \syn{ mod} \ A_4^0+F^5A+2A. 
\end{align*}
As a direct consequence of the definition of $A_4^0$ and $F^5A$, one thus sees that $\omega_1(r)\notin A_4^0+F^5A+2A$. This finishes the proof.

\end{proof}

\begin{proof}[{\bf Proof of Theorem \ref{thm2}}]
Suppose there exists a continuous group homomorphism $W(A) \to HH_0(E(A))$. Composing with the natural homomorphism $HH_0(E(A)) \to HH_0(X(A))$ we get a map $$\Phi: W(A) \to HH_0(X(A)).$$ 
Let us denote the quotient map $X(A) \to HH_0(X(A))$ by $\pi$. By lemma \ref{pin}, the following diagram must commute:
$$\xymatrix{
 X(A)\ar@{->>}[r]^-{\pi} & HH_0(X(A)) \\
A^{\N_0} \ar[r]^q \ar[u]^{\Omega}  & W(A) \ar[u]^-{\Phi}
}$$

\noindent Using surjectivity of $q$ and the fact that both $\Omega$ and $\overline{\omega}\circ q$ are given by the same Witt polynomials, one checks that the above commutative square can be extended to the following commutative diagram.

$$\xymatrix{
   X(A)\ar@{->>}[r]^-{\pi} & HH_0(X(A)) \ar[r]^-{\overline{\gamma}}             & \Big( \ca\Big)^{\N_0}\ar@{=}[d]\\
A^{\N_0}  \ar[r]^q \ar[u]^{\Omega}  & W(A) \ar[u]^-{\Phi}\ar[r]^{\overline{\omega}} & \Big(\ca\Big)^{\N_0}
}$$
However, such a commutative diagram is not possible as there exists an element $r\in A^{\N_0}$ (see proof of Theorem \ref{notinjective}) such that 
$ \overline{\gamma}\circ \pi \circ \Omega(r) \neq 0$ but $\overline{\omega}\circ q (r)=0$. 
\end{proof}

%%%%%%%%%%%%%%%%%%%%%%%%%%%%%%%%%%%%%%%%%%%%%%

\end{document}